\DeclareMathOperator{\aff}{aff}
\DeclareMathOperator{\cone}{cone}
\DeclareMathOperator{\G}{G}
\DeclareMathOperator{\F}{F}
\DeclareMathOperator{\Ap}{Ap}
\DeclareMathOperator{\relintr}{\mathrm{relint}}
\DeclareMathOperator{\HH}{H}
\DeclareMathOperator{\Betti}{Betti}
\DeclareMathOperator{\type}{t}
\newtheorem{theorem}{Theorem}
\newtheorem{lemma}[theorem]{Lemma}
\newtheorem{corollary}[theorem]{Corollary}
\newtheorem{proposition}[theorem]{Proposition}
\theoremstyle{definition}
\newtheorem{example}[theorem]{Example}
\theoremstyle{remark}
\newtheorem{remark}[theorem]{\bf Remark}
\title{Frobenius vectors, Hilbert series and gluings}
\author{A. Assi}
\address{Universit\'e d'Angers, D\'epartement de Math\'ematiques, LAREMA, UMR 6093, 2 bd Lavoisier, 49045 Angers Cedex 01, France}
\email{assi@univ-angers.fr}
\thanks{The first author is partially supported by the project GDR CNRS 2945}
\author{P. A. Garc\'{\i}a-S\'anchez}
\address{Departamento de \'Algebra, Universidad de Granada, E-18071 Granada, Espa\~na}
\email{pedro@ugr.es}
\thanks{The second author is supported by the projects MTM2010-15595, FQM-343,  FQM-5849, and FEDER funds.} 
\author{I. Ojeda}
\address{Departamento de Matem\'{a}ticas, Universidad de Extremadura,  E-06071 Badajoz, Espa\~na}
\email{ojedamc@unex.es}
\thanks{The third author is partially supported by the project  MTM2012-36917-C03-01, National Plan I+D+I and by Junta de Extremadura (FEDER funds)}
\keywords{Affine semigroup, gluing, Frobenius vector, Hilbert series}
\subjclass[2010]{20M14 (Primary) 14M10, 11D07, 05A15 (Secondary).}
\begin{document}

\date{\today}

\begin{abstract}
Let $S_1$ and $S_2$ be two affine semigroups and let $S$ be the gluing
of $S_1$ and $S_2$. Several invariants of $S$ are then related to those of
$S_1$ and $S_2$; we review some of the most important properties preserved under gluings. 
The aim of this paper is to prove that this is the case for the Frobenius vector and the Hilbert series. 
Applications to complete intersection affine semigroups are also given.
\end{abstract}

\maketitle

\section{On gluins of affine semigroups}

In this section we take a quick tour summarizing some of the more relevant results on the gluing of affine semigroups. We also introduce concepts and notations that will be used later on in the paper.

An \emph{affine semigroup} $S$ is finitely generated submonoid of $\mathbb Z^m$ for some positive integer $m$. If $S \cap (-S) = 0$, that is to say $S$ is reduced, it can be shown that it has a unique minimal system of generators (see for instance \cite[Chapter 3]{RGS99}). The cardinality of the minimal generating system of $S$ is known as the \emph{embedding dimension} of $S$. Recall that each reduced affine semigroup can be embedded into $\mathbb{N}^m$ for some $m$.  In the following we will assume that our affine semigroups are submonoids of $\mathbb{N}^m$.

Given an affine semigroup $S\subseteq \mathbb N^m$, denote by $\G(S)$ the group spanned by $S$, that is,
\[\G(S)=\big\{ \mathbf z \in \mathbb Z^m \mid \mathbf z= \mathbf a - \mathbf b, \mathbf a, \mathbf b\in S \big\}.\]

Let $A$ be the minimal generating system of $S$, and $A=A_1\cup A_2$ be a nontrivial partition of $A$. Let $S_i=\langle A_i\rangle$ (the monoid generated by $A_i$), $i\in \{1,2\}$. Then $S=S_1+S_2$. We say that $S$ is the \emph{gluing} of $S_1$ and $S_2$ by $\mathbf d$ if
\begin{itemize}
\item $\mathbf d\in S_1\cap S_2$ and,
\item $\G(S_1)\cap \G(S_2) = \mathbf d\mathbb Z$.
\end{itemize} 
We will denote this fact by $S=S_1+_{\mathbf d} S_2$.

There are several properties that are preserved under gluings, and also some invariants of a gluing $S_1+_\mathbf d S_2$ can be computed by knowing their values in $S_1$ and $S_2$. We summarize some of them next.

Assume that $A=\{\mathbf a_1,\ldots,\mathbf a_k\}$. The monoid homomorphism $\varphi: \mathbb N^k\to S$ induced by $\mathbf e_i\mapsto \mathbf a_i$, $i\in\{1,\ldots,k\}$ is an epimorphism (where $\mathbf e_i$ is the $i$th row of the $k\times k$ identity matrix). Thus $S$ is isomorphic as a monoid to $\mathbb N^k/\ker\varphi$, where $\ker\varphi$ is the kernel congruence of $\varphi$, that is, the set of pairs $(\mathbf a,\mathbf b)\in\mathbb N^k\times \mathbb N^k$ with $\varphi(\mathbf a)=\varphi(\mathbf b)$. A \emph{presentation} of $S$ is a system of generators of $\ker\varphi$. A \emph{minimal presentation} is a presentation such that none of its proper subsets is a presentation. All minimal presentations have the same (finite) cardinality (see for instance \cite[Corollary 9.5]{RGS99}). Suppose that $S=S_1+_\mathbf d S_2$, with $S_i=\langle A_i\rangle$, $i\in\{1,2\}$ and $A=A_1\cup A_2$ a nontrivial partition of $A$. We may assume without loss of generality that $A_1=\{\mathbf a_1,\ldots,\mathbf  a_l\}$ and $A_2=\{\mathbf a_{l+1},\ldots,\mathbf  a_k\}$. According to \cite[Theorem 1.4]{gluing}, if we know minimal presentations $\rho_1$ and $\rho_2$ of $S_1$ and $S_2$, respectively, then \[\rho=\rho_1\cup\rho_2\cup\{(\mathbf a,\mathbf b)\}\] is a minimal presentation of $S$, for every $(\mathbf a,\mathbf b)\in \mathbb N^k\times \mathbb N^k$ with $\varphi(\mathbf a)=\varphi(\mathbf b)$,  the first $l$ coordinates of $\mathbf b$ equal to zero and the last $k-l$ coordinates of $\mathbf a$ equal to zero (actually, \cite[Theorem 1.4]{gluing} asserts that this characterizes that $S=S_1+_\mathbf d S_2$).

For an affine semigroup $S$ define $\Betti(S)$ as the set of $\mathbf s\in S$ for which there exists $\mathbf a,\mathbf b\in \varphi^{-1}(\mathbf s)$ such that $(\mathbf a,\mathbf b)$ belongs to a minimal presentation of $S$. Theorem 10 in \cite{uniquely} states that 
\[\Betti(S_1+_\mathbf d S_2)=\Betti(S_1)\cup\Betti(S_2)\cup\{\mathbf d\}.\]
Since several invariants as the catenary degree and the maximum of the delta sets depend on the Betti elements of $S$ (\cite{cat-tame} and \cite{delta}, respectively), the computation of these invariants for $S_1+_\mathbf d S_2$ can be performed once we know their values for $S_1$, $S_2$ and $\mathbf d$ (see for instance \cite[Corollary 4]{acpi}).

Affine semigroups with a single Betti element can be characterized as a gluing of several copies of affine semigroups with empty minimal presentation (and thus isomorphic to $\mathbb N^t$ for some positive integer $t$) along this single Betti element (\cite{single-betti}).

We say that $S$ is \emph{uniquely presented} if for every two minimal presentations $\sigma$ and $\tau$ and every $(\mathbf a,\mathbf b)\in \sigma$, either $(\mathbf a,\mathbf b)\in \tau$ or $(\mathbf b,\mathbf a)\in \tau$, that is, there is a unique minimal presentation up to rearrangement of the pairs of the minimal presentation. It is known (\cite[Theorem 12]{uniquely}) that $S_1+_\mathbf d S_2$ is uniquely presented if and only if $S_1$ and $S_2$ are uniquely presented and $\pm(\mathbf d-\mathbf a)\not\in S_1+_\mathbf d S_2$ for every $\mathbf a\in \Betti(S_1)\cup\Betti(S_2)$. 

It is well known that the cardinality of any minimal presentation of an affine semigroup is greater than or equal to its embedding dimension minus the dimension of the vector space spanned by the semigroup. An affine semigroup is a \emph{complete intersection} affine semigroup if the cardinality of any of its minimal presentations attains this lower bound. It can be shown that an affine semigroup is a complete intersection if and only if it is either isomorphic to $\mathbb N^t$ for some positive integer $t$ or it is the gluing of two complete intersection affine semigroups (\cite{fischer97}). This result generalizes \cite{ci-simplicial} which generalizes the classical result by Delorme for numerical semigroups (\cite{delorme}; actually the definition of gluing was inspired in that paper).

A \emph{numerical semigroup} is a submonoid of $\mathbb N$ with finite complement in $\mathbb N$. It is easy to see that every numerical semigroup is finitely generated (see for instance \cite[Chapter 1]{ns-book}) and thus every numerical semigroup is an affine semigroup. Let $S$ be a numerical semigroup. The largest integer not belonging to $S$ is known as its \emph{Frobenius number}, $\F(S)$. By definition $\F(S)+1+\mathbb N\subseteq S$. This is why the integer $\F(S)+1$ is known as the \emph{conductor} of $S$. Delorme in \cite{delorme} shows that the conductor of a numerical semigroup that is a gluing, say $S_1+_d S_2$, can be computed in terms of the conductors of $S_1$, $S_2$ and $d$. Thus a formula for the Frobenius number of a numerical semigroup that is a gluing is easily derived (this idea is exploited in \cite{ags} to give a procedure to compute the set of all complete intersection numerical semigroups with given Frobenius number). One of the aims of this paper is to generalize this formula for affine semigroups.

Let $S$ be a numerical semigroup. An element $g\in \mathbb Z\setminus S$ is a \emph{pseudo-Frobenius number} if $g+(S\setminus\{0\})\subseteq S$. In particular $\F(S)$ is always a pseudo-Frobenius number. The cardinality of the set of pseudo-Frobenius numbers is known as the (Cohen-Macaulay) \emph{type} of $S$, $\type(S)$. A numerical semigroup is \emph{symmetric} if its type is one (there are plenty of characterizations of this property, see for instance \cite[Chapter 3]{ns-book}). Delorme in his above mentioned paper \cite{delorme} also proved that a numerical semigroup that is a gluing $S_1+_d S_2$ is symmetric if and only if $S_1$ and $S_2$ are symmetric. Nari in \cite[Proposition 6.6]{nari} proved that for a numerical semigroup of the form $S_1+_d S_2$, 
\[\type(S_1+_d S_2)=\type(S_1)\type(S_2)\]
(actually the definition of gluing for numerical semigroups is slightly different and we have to divide $S_1$ and $S_2$ by their greatest common divisors in order to get $S_1$ and $S_2$ numerical semigroups; see the paragraph after Theorem \ref{th-gl-hil}). This formula can be seen as a generalization of the fact that the gluing of symmetric numerical semigroups is again symmetric, and it also shows that 
\begin{itemize}
\item the gluing of pseudo-symmetric numerical semigroups (the only pseudo-Frobenius numbers are the Frobenius number and its half) cannot be pseudo-symmetric,
\item the gluing of two nonsymmetric almost symmetric numerical semigroup is not almost symmetric ($S$ is almost symmetric if the cardinality of $\mathbb N\setminus S$ equals $(\F(S)+\type(S))/2$).
\end{itemize}

Let $S$ be an affine semigroup, and let $\mathbf s\in S\setminus\{0\}$. The \emph{Ap\'ery} set of $\mathbf s$ in $S$ is the set 
\[\Ap(S,\mathbf s)=\{ \mathbf x\in S\mid \mathbf x-\mathbf s \not \in S\}.
\]
This set has in general infinitely many elements. If $S$ is a numerical semigroup and $s\in S\setminus\{0\}$, then $\Ap(S,s)$ has exactly $s$ elements (one for each congruent class modulo $s$). Let $m$ be the least positive integer belonging to $S$, which is known as the \emph{multiplicity} of $S$, and assume that $S$ is minimally generated by $\{n_1,\ldots, n_k\}$, with $n_1<\cdots<n_k$. Clearly, $n_1=m$ and 
$\Ap(S,m)\subseteq \{ \sum_{i=2}^k a_i n_i\mid a_i\le \alpha_i, i\in\{2,\ldots,k\}\}$, with $\alpha_i=\max\{ k\in \mathbb N\mid kn_i\in \Ap(S,m)\}$. When the equality holds we say that the Ap\'ery set of $S$ is $\alpha$-rectangular. Theorem 2.3 in \cite{dms} shows that every numerical semigroup with $\alpha$-rectangular Ap\'ery set other than $\mathbb N$ can be constructed by gluing a numerical semigroup with the same property and a copy of $\mathbb N$.

For a given affine semigroup $S$ and a field $K$, the \emph{semigroup ring} $K[S]$ is defined as $K[S]=\bigoplus_{s\in S} K t^s$ with $t$ an indeterminate. Addition is performed componentwise and the product is calculated by using distributive law and $t^{s}t^{s'}=t^{s+s'}$ for all $s,s'\in S$. If $S$ is a numerical semigroup, then $K[S]$ is a subring of $K[t]$. Recently (\cite{gsl}), it has been shown that if for every relative $I$ ideal of $K[S_i]$, $i\in \{1,2\}$ generated by two monomials, $I \otimes_{K[S_i]} I^{-1}$ has nontrivial torsion, then the same property holds for $S_1+_d S_2$, solving partly a conjecture stated by Huneke and Wiegand (see \cite{gsl} for details; also the restriction of being generated by just two elements can be removed if we take $S_2$ as a copy of $\mathbb N$).

If $S$ is a numerical semigroup minimally generated by $\{n_1,\ldots,n_k\}$, then $\mathfrak m=(t^{n_1},\ldots, t^{n_k})$ is the unique maximal ideal of the power series ring $R=K[[t^{n_1},\ldots, t^{n_k}]]=K[[S]]$. The Hilbert function of the associated graded ring $\mathrm{gr}_\mathfrak m(R)=\bigoplus_{n\in\mathbb N} \mathfrak m^n/\mathfrak m^{n+1}$ is defined as $n\mapsto \dim_K (\mathfrak m^n/\mathfrak m^{n+1})$. In \cite{ams} it is shown that if the Hilbert functions of the associated graded rings of $K[[S_1]]$ and $K[[S_2]]$ are nondecreasing, then so is the Hilbert function of the associated graded ring of $K[[S_1+_d S_2]]$ when the gluing is a ``nice'' gluing (see \cite[Theorem 2.6]{ams} for details; this nice gluing has been also exploited in \cite{jz}).

Lately, for $T=\langle an_1,an_2,an_3,an_4\rangle+_{ab} \langle b\rangle$, Barucci and  Fr\"oberg have been able to compute the Betti numbers of the free resolution of $K[T]$ in terms of that of $K[S]$, with $S=\langle n_1,n_2,n_3,n_4\rangle$ (\cite{bf}).

\section{Gluings and cones}

Given an affine semigroup $S\subseteq \mathbb N^m$, denote by $\cone(S)$ the cone spanned by $S$, that is,
\[\cone(S)=\big\{ q\, \mathbf a \mid q\in \mathbb Q_{\geq 0}, \mathbf a \in S \big\}.\] Observe that $\cone(S)$ is pointed (the only subspace included in it is $\{0\}$), because $S$ is reduced. 

Clearly, if $A$ is finite and generates $S$, then  
\[\G(S)=\left\{\sum_{\mathbf a\in A} z_{\mathbf a} \mathbf a \mid  z_{\mathbf a}\in \mathbb Z \hbox{ for all } \mathbf a\right\}\hbox{ and } \cone(S)=\left\{\sum_{\mathbf a\in A} q_{\mathbf a} \mathbf a \mid  q_{\mathbf a} \in \mathbb Q_{\geq 0} \hbox{ for all } \mathbf a\right\}.\]

We will write $\aff(S)$ for the affine span of $S$, that is, 
\[\aff(S) = G(S) \otimes_\mathbb{Z} \mathbb{Q}.\]

As usual we use the notation
\[\langle A\rangle= \{\sum_{\mathbf a\in A} n_{\mathbf a}\mathbf a\mid n_{\mathbf a}\in \mathbb N \hbox{ for all } \mathbf a\in A\}
\]
(all sums are finite, that is, if $A$ has infinitely many elements, all but a finite number of $z_\mathbf a$, $q_\mathbf a$ and $n_\mathbf a$ are zero). 

\begin{lemma}\label{int-cone-char}
Let $\mathbf r_1,\ldots,\mathbf r_k, \mathbf r_{k+1}$ and $\mathbf{x} \in \cone(\mathbb N^m) \setminus \{0\}$, for some positive integers $m$ and $k$. If $\cone(\mathbf r_1,\ldots, \mathbf r_k)=\cone(\mathbf r_1,\ldots, \mathbf r_k, \mathbf r_{k+1})$, then the following conditions are equivalent:
\begin{enumerate}[(1)]
\item There exist $q_1,\ldots q_k\in \mathbb Q_{>0}$ such that $x=q_1 \mathbf r_1+\cdots +q_k \mathbf r_k$.
\item There exist $q_1',\ldots, q_{k+1}'\in \mathbb Q_{>0}$ such that $x=q_1'\mathbf r_1+\cdots +q_k' \mathbf r_k+q_{k+1}' \mathbf r_{k+1}$.
\end{enumerate}
\end{lemma}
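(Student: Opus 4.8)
The plan is to reduce both implications to a single observation. Since $\cone(\mathbf r_1,\ldots,\mathbf r_k)=\cone(\mathbf r_1,\ldots,\mathbf r_k,\mathbf r_{k+1})$, in particular $\mathbf r_{k+1}$ belongs to $\cone(\mathbf r_1,\ldots,\mathbf r_k)$, so by the explicit description of cones recalled above there exist $\lambda_1,\ldots,\lambda_k\in\mathbb Q_{\geq 0}$ with $\mathbf r_{k+1}=\lambda_1\mathbf r_1+\cdots+\lambda_k\mathbf r_k$. Both directions are then obtained by rewriting the given expression for $x$ using this relation.

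First I would do $(2)\Rightarrow(1)$, which is the easy direction: if $x=q_1'\mathbf r_1+\cdots+q_{k+1}'\mathbf r_{k+1}$ with every $q_i'>0$, substituting the relation for $\mathbf r_{k+1}$ gives $x=\sum_{i=1}^k(q_i'+q_{k+1}'\lambda_i)\mathbf r_i$, and each coefficient $q_i:=q_i'+q_{k+1}'\lambda_i$ is strictly positive because $q_i'>0$ and $q_{k+1}'\lambda_i\geq 0$.

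For $(1)\Rightarrow(2)$ I would argue by a small perturbation. Starting from $x=q_1\mathbf r_1+\cdots+q_k\mathbf r_k$ with every $q_i>0$, for any $\varepsilon>0$ we can write
\[
x=\sum_{i=1}^k(q_i-\varepsilon\lambda_i)\,\mathbf r_i+\varepsilon\sum_{i=1}^k\lambda_i\mathbf r_i=\sum_{i=1}^k(q_i-\varepsilon\lambda_i)\,\mathbf r_i+\varepsilon\,\mathbf r_{k+1}.
\]
Since the $q_i$ are strictly positive and finitely many, there is $\varepsilon>0$ with $q_i-\varepsilon\lambda_i>0$ for all $i$; then $q_i':=q_i-\varepsilon\lambda_i$ for $i\leq k$ and $q_{k+1}':=\varepsilon$ give the representation required in (2).

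This argument never uses that the vectors are nonzero or that they lie in $\cone(\mathbb N^m)$, so those hypotheses are presumably there only for the intended applications. I do not anticipate any real obstacle; the single point needing (a trivial amount of) care is the choice of $\varepsilon$ in the second implication, so that all $k$ perturbed coefficients stay strictly positive simultaneously.
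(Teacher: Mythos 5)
Your proof is correct and follows essentially the same route as the paper's: both express $\mathbf r_{k+1}$ as a nonnegative rational combination of $\mathbf r_1,\ldots,\mathbf r_k$, obtain $(2)\Rightarrow(1)$ by substitution, and obtain $(1)\Rightarrow(2)$ by subtracting a sufficiently small positive multiple of that combination (your $\varepsilon$ is the paper's $1/N$). No gaps; your closing remark that the hypotheses $\mathbf x\neq 0$ and membership in $\cone(\mathbb N^m)$ are not needed for the argument is also accurate.
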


\begin{proof}
Observe that from the hypothesis, $\mathbf r_{k+1}\in \cone(\mathbf r_1,\ldots, \mathbf r_k)$, and thus there exists $t_1,\ldots, t_k\in \mathbb Q_{\geq 0}$ such that $\mathbf r_{k+1}=t_1 \mathbf r_1+\dots+t_k \mathbf r_k$. From this it easily follows (2) implies (1). 

Assume that there exist $q_1,\ldots q_k\in \mathbb Q_{> 0}$ such that $\mathbf x=q_1 \mathbf r_1+\cdots +q_k \mathbf  r_k$. Let $N\in \mathbb N$ be such that for all $i\in \{1,\ldots,k\}$, $t_i/N<  q_i$ (this is possible since $q_i>0$ for all $i$). Take $q_i'=q_i-t_i/N$ (which is a positive rational number) for all $i\in \{1,\ldots, k\}$, and $q_{k+1}'=1/N$. Then $q_1' \mathbf r_1+\cdots +q_k'\mathbf  r_k+q_{k+1}'\mathbf  r_{k+1}=q_1 \mathbf r_1+\cdots + q_k \mathbf r_k-1/N \mathbf r_{k+1}+1/N \mathbf r_{k+1}= \mathbf x$.
\end{proof}

Given $\mathbf r_1,\ldots, \mathbf r_k\in  \cone(\mathbb N^m) \setminus \{0\}$, we define the \emph{relative interior} of $\cone(\mathbf r_1,\ldots,\mathbf r_k)$ by
\[ \relintr(\cone(\mathbf r_1,\ldots, \mathbf r_k))= \big \{ q_1 \mathbf r_1+\cdots + q_k \mathbf r_k\mid q_1,\ldots, q_k\in \mathbb Q_{> 0} \big\}.\]
Observe that the relative interior of a cone $C$ is the topological interior of $C$ in its affine span, $\aff(\mathbf r_1,\ldots, \mathbf r_k),$ with the subspace topology.

For $A\subseteq \mathbb N^m$, we say that $F$ is a \emph{face} of $\cone(A)$ if $F\neq \emptyset$ and there exists $\mathbf c\in \mathbb Q^m\setminus\{0\}$ such that 
\begin{itemize}
\item $F=\{\mathbf x\in \cone(A)\mid \mathbf c\cdot \mathbf x=0\}$ and 
\item $\mathbf c\cdot \mathbf y\ge 0$ for all $\mathbf y\in \cone(A)$.
\end{itemize}
An element $\mathbf{a}\in A$ is an \emph{extremal ray} of $\cone(A)$ if $\mathbb Q_{\ge 0}\mathbf a$ is a one dimensional face of $\cone(A)$.

Now, according to Lemma \ref{int-cone-char}, if $A$ is the minimal system of generators of an affine semigroup $S\subseteq \mathbb N^m$, then we can say that $\mathbf x\in \relintr(\cone(S))$ if and only if $\mathbf x\in \relintr(\cone(A))$, even if $A$ contains elements that are not extremal rays. We get also the following consequence.

\begin{proposition}\label{int-sum}
Let $A$ be a nonempty subset of $\mathbb N^m$, with $m$ a positive integer. Assume that $A=A_1\cup A_2$ is a nontrivial partition of $A$. Then $\relintr(\cone(A)) = \relintr(\cone(A_1)) + \relintr(\cone(A_2))$.
\end{proposition}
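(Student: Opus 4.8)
The plan is to argue directly from the explicit description of the relative interior introduced just above the statement, namely $\relintr(\cone(\mathbf r_1,\ldots,\mathbf r_k))=\{q_1\mathbf r_1+\cdots+q_k\mathbf r_k\mid q_1,\ldots,q_k\in\mathbb Q_{>0}\}$. With this description in hand the proposition becomes a matter of splitting a sum into two blocks, so neither Lemma~\ref{int-cone-char} nor the topological characterisation of $\relintr$ is really needed for the proof itself (they are what make the statement useful afterwards, when one wants to pass from $A_i$ to $\langle A_i\rangle$).

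First I would fix notation exploiting that the partition is nontrivial: both $A_1$ and $A_2$ are nonempty, so write $A_1=\{\mathbf a_1,\ldots,\mathbf a_l\}$ and $A_2=\{\mathbf a_{l+1},\ldots,\mathbf a_k\}$ with $1\le l<k$. (If $A$ happens to be infinite, one first replaces $A$ by a finite subset spanning the same cone and splits it along the partition into finite subsets of $A_1$ and $A_2$; this is harmless since $\relintr(\cone(\,\cdot\,))$ depends only on the cone.) For the inclusion $\relintr(\cone(A_1))+\relintr(\cone(A_2))\subseteq\relintr(\cone(A))$, take $\mathbf x_1=\sum_{i=1}^{l}q_i\mathbf a_i$ and $\mathbf x_2=\sum_{i=l+1}^{k}q_i\mathbf a_i$ with every $q_i\in\mathbb Q_{>0}$; then $\mathbf x_1+\mathbf x_2=\sum_{i=1}^{k}q_i\mathbf a_i$ is a combination of the generators of $\cone(A)$ with all coefficients in $\mathbb Q_{>0}$, hence lies in $\relintr(\cone(A))$. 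For the reverse inclusion, take $\mathbf x\in\relintr(\cone(A))$ and use the defining description to write $\mathbf x=\sum_{i=1}^{k}q_i\mathbf a_i$ with every $q_i\in\mathbb Q_{>0}$; setting $\mathbf x_1=\sum_{i=1}^{l}q_i\mathbf a_i\in\relintr(\cone(A_1))$ and $\mathbf x_2=\sum_{i=l+1}^{k}q_i\mathbf a_i\in\relintr(\cone(A_2))$ gives $\mathbf x=\mathbf x_1+\mathbf x_2$.

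I do not expect a genuine obstacle here; the statement is essentially immediate once the right formulation of $\relintr$ is used, and the only points that need a moment of care are bookkeeping ones: that the \emph{nontriviality} of the partition is exactly what guarantees $A_1$ and $A_2$ are each nonempty, so that each $\relintr(\cone(A_i))$ is the relative interior of an honest subcone rather than of $\{0\}$; and, in the infinite case, that the reduction to a finite generating set must be carried out compatibly with the partition. I would present the proof as the two short inclusions above, or equivalently as the single set-equality $\relintr(\cone(A_1))+\relintr(\cone(A_2))=\{\sum_{i=1}^{l}q_i\mathbf a_i\mid q_i>0\}+\{\sum_{i=l+1}^{k}q_i\mathbf a_i\mid q_i>0\}=\{\sum_{i=1}^{k}q_i\mathbf a_i\mid q_i>0\ \text{for all }i\}=\relintr(\cone(A))$.
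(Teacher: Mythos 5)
Your proof is correct and follows essentially the same route as the paper's: both directions are obtained by writing elements as combinations with all coefficients in $\mathbb Q_{>0}$ and splitting (or merging) the sum along the partition $A=A_1\cup A_2$. The paper's version is just a terser rendering of the same argument, so no further comparison is needed.
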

\begin{proof}
Obviously, if $\mathbf x_i \in \relintr(\cone(A_i))$, $i \in\{1,2\}$, then $\mathbf x_1 + \mathbf x_2 \in \relintr(\cone(A))$. Now, consider $\mathbf{x} \in \relintr(\cone(A))$. Without loss of generality we may assume that $\mathbf{x} = \sum_{\mathbf{a} \in A} q_\mathbf{a} \mathbf{a}$ with $q_\mathbf{a} \in \mathbb{Q}_{> 0}$. Thus, by taking $\mathbf{x}_i = \sum_{\mathbf{a} \in A_i} q_\mathbf{a} \mathbf{a}$, we are done.
\end{proof}

Notice that if $S$ is the gluing of $S_1$ and $S_2$ by $\mathbf{d}$, then $$\mathbf{d} \not\in \relintr(\cone(S)) \hbox{ implies } \mathbf{d} \not\in \relintr(\cone(S_1)) \cap \relintr(\cone(S_2)).$$ Otherwise, we may take $\mathbf x_i = (1/2) \mathbf{d},\ i \in\{ 1,2\}$.

\begin{proposition}\label{suma-exteriores}
Let $A$ be a nonempty subset of $\mathbb N^m$, with $m$ a positive integer. Assume that $A=A_1\cup A_2$ is a nontrivial partition of $A$. Let $F$ be a face of $\cone(A)$. Then every $\mathbf x\in F$ can be expressed as $\mathbf x_1+\mathbf x_2$ with $\mathbf x_i$ in a face of $\cone(A_i)$, $i\in\{1,2\}$.
\end{proposition}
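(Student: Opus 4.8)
The plan is to cut out faces of the two sub-cones using the very same supporting hyperplane that defines $F$. First I would record the elementary decomposition $\cone(A) = \cone(A_1) + \cone(A_2)$, which is immediate from the description of $\cone(A)$ as the set of finite nonnegative rational combinations of the elements of $A$ together with the partition $A = A_1 \cup A_2$. In particular $\cone(A_i) \subseteq \cone(A)$ for $i \in \{1,2\}$.

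Next, let $\mathbf{c} \in \mathbb{Q}^m \setminus \{0\}$ be a functional witnessing that $F$ is a face, so that $F = \{\mathbf{x} \in \cone(A) \mid \mathbf{c} \cdot \mathbf{x} = 0\}$ and $\mathbf{c} \cdot \mathbf{y} \ge 0$ for every $\mathbf{y} \in \cone(A)$. Since $\cone(A_i) \subseteq \cone(A)$, this same $\mathbf{c}$ is nonnegative on $\cone(A_i)$, so I would set $F_i := \{\mathbf{x} \in \cone(A_i) \mid \mathbf{c} \cdot \mathbf{x} = 0\}$ and observe that $F_i$ is a face of $\cone(A_i)$: it is nonempty because $0 \in \cone(A_i)$ and $\mathbf{c} \cdot 0 = 0$, and the two bulleted conditions in the definition of a face hold with this $\mathbf{c}$. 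Finally, given $\mathbf{x} \in F$, I would use the decomposition of the previous paragraph to write $\mathbf{x} = \mathbf{x}_1 + \mathbf{x}_2$ with $\mathbf{x}_i \in \cone(A_i)$; then $0 = \mathbf{c} \cdot \mathbf{x} = \mathbf{c} \cdot \mathbf{x}_1 + \mathbf{c} \cdot \mathbf{x}_2$ is a sum of two nonnegative rationals, hence $\mathbf{c} \cdot \mathbf{x}_1 = \mathbf{c} \cdot \mathbf{x}_2 = 0$, that is, $\mathbf{x}_i \in F_i$ for $i \in \{1,2\}$, as desired.

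I do not anticipate a real obstacle: the whole content is that a supporting functional of $\cone(A)$ restricts to a supporting functional of each of $\cone(A_1)$ and $\cone(A_2)$. The only points demanding a moment of care are checking that $F_i$ genuinely satisfies the definition of a face used in this paper (the nonemptiness clause, handled by $0 \in F_i$), and noticing that degenerate situations — e.g. $\mathbf{c}$ vanishing identically on some $\cone(A_i)$, so that $F_i = \cone(A_i)$, or $F_i = \{0\}$ — are still admissible, since the definition only requires $\mathbf{c} \ne 0$ in $\mathbb{Q}^m$ and imposes no nondegeneracy of $\mathbf{c}$ on the cone itself.
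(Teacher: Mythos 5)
Your proof is correct and follows essentially the same route as the paper's: decompose $\cone(A)=\cone(A_1)+\cone(A_2)$, note that the supporting functional $\mathbf c$ of $F$ is nonnegative on each $\cone(A_i)$, and conclude from $0=\mathbf c\cdot\mathbf x_1+\mathbf c\cdot\mathbf x_2$ that each $\mathbf x_i$ lies in the face of $\cone(A_i)$ cut out by $\mathbf c$. Your extra remarks on nonemptiness and degenerate cases are sound but not needed beyond what the paper already records.
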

\begin{proof}
Let $\mathbf x\in F$. Then there exists $\mathbf c\in \mathbb Q^m\setminus\{0\}$ such that $\mathbf c\cdot \mathbf x=0$ and $\mathbf c\cdot \mathbf y\ge 0$ for all $\mathbf y\in \cone(A)$. Notice that $\cone(A)=\cone(A_1)+\cone(A_2)$. Hence there exists $\mathbf x_i\in \cone(A_i)$, $i\in\{1,2\}$ such that $\mathbf x=\mathbf x_1+\mathbf x_2$. As $\cone(A_i)\subseteq \cone(A)$, $\mathbf c\cdot \mathbf y_i\ge 0$, for $i\in\{1,2\}$ and all $\mathbf y_i\in \cone(A_i)$. Hence $0=\mathbf c\cdot\mathbf x=\mathbf c\cdot\mathbf x_1+\mathbf c\cdot\mathbf x_2$ forces $\mathbf c\cdot \mathbf x_1=\mathbf c\cdot\mathbf x_2=0$. We conclude that $\mathbf x_i$ is in the face $\{\mathbf x\in \mathbf Q^n\mid \mathbf c\cdot\mathbf x=0\}\cap\cone(A_i)$ of $\cone(A_i)$, $i\in\{1,2\}$. 
%
%
\end{proof}

We end this section by giving an affine-geometric characterization of gluings.

\begin{proposition}\label{Prop3}
Let $S$ be an affine semigroup and $\mathbf{d} \in \mathbb{N}^n \setminus \{0\}$. If $S = S_1 +_\mathbf{d} S_2$ then $$\cone(S_1) \cap \cone(S_2) = \mathbf d \mathbb{Q}_{\geq 0}.$$
\end{proposition}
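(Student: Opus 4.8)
The inclusion $\mathbf d\,\mathbb Q_{\geq 0}\subseteq \cone(S_1)\cap\cone(S_2)$ is immediate from the definition of gluing: since $\mathbf d\in S_1\cap S_2$, any nonnegative rational multiple of $\mathbf d$ lies in both $\cone(S_1)$ and $\cone(S_2)$. So the content is the reverse inclusion, and the plan is to exploit the two defining conditions of a gluing, especially $\G(S_1)\cap\G(S_2)=\mathbf d\,\mathbb Z$, together with the fact that these cones are pointed (the ambient semigroup $S$ is reduced, so $\cone(S)$ is pointed, and hence so are $\cone(S_1)$ and $\cone(S_2)$, being subsets of it).

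First I would take $\mathbf x\in\cone(S_1)\cap\cone(S_2)$ with $\mathbf x\neq 0$ (the case $\mathbf x=0$ being trivial). Clearing denominators, there is a positive integer $N$ with $N\mathbf x\in S_1\cap S_2$; replacing $\mathbf x$ by $N\mathbf x$ it suffices to show that any element of $S_1\cap S_2$ lies on the ray $\mathbf d\,\mathbb Q_{\geq 0}$. Now if $\mathbf s\in S_1\cap S_2$, then in particular $\mathbf s\in \G(S_1)\cap\G(S_2)=\mathbf d\,\mathbb Z$, so $\mathbf s=z\mathbf d$ for some $z\in\mathbb Z$. It remains to rule out $z<0$. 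Here is where pointedness enters: if $z<0$, then both $\mathbf s$ and $-\mathbf s=(-z)\mathbf d$ would be elements of $\cone(S)$ — indeed $\mathbf s\in S\subseteq\cone(S)$, and $-\mathbf s=(-z)\mathbf d$ with $-z>0$ and $\mathbf d\in S$, so $-\mathbf s\in\cone(S)$ — contradicting the fact that $\cone(S)$ is pointed. (If $\mathbf s=0$ we land on the ray trivially; and $z\mathbf d=0$ with $\mathbf d\neq 0$ forces $z=0$, so this case does not actually arise for $\mathbf s\neq 0$.) Hence $z\geq 0$ and $\mathbf x\in\mathbf d\,\mathbb Q_{\geq 0}$.

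I expect the argument to be short, so there is no serious obstacle; the only point requiring a little care is the passage from rational cone membership to integer semigroup membership (the clearing-of-denominators step) and making sure pointedness is invoked correctly — namely that $\cone(S)$ contains no line, which is exactly the reducedness of $S$ recalled after the definition of $\cone$. One should also note that this need not use the hypothesis $\mathbf d\in S_1\cap S_2$ for the hard inclusion — that hypothesis is only what gives the easy inclusion and guarantees the intersection is nonzero — the condition $\G(S_1)\cap\G(S_2)=\mathbf d\,\mathbb Z$ together with pointedness does the rest.
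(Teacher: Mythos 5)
Your proof is correct and follows essentially the same route as the paper: clear denominators to pass from the cones to $\G(S_1)\cap\G(S_2)=\mathbf d\,\mathbb Z$, then conclude the multiple of $\mathbf d$ is nonnegative. The only difference is that you make the sign argument explicit (via pointedness of $\cone(S)$), whereas the paper leaves the nonnegativity of the integer multiple implicit from the ambient $\mathbb N^m$; this is a welcome clarification but not a different method.
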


\begin{proof}
By definition, $\mathbf d \in S_1\cap S_2$ and, clearly, $\mathbf d \mathbb{Q}_{\geq 0} \subseteq \cone(S_1) \cap \cone(S_2)$. If $\mathbf{d}' \in \cone(S_1) \cap \cone(S_2)$, then $\mathbf{d}' = \frac{z_1}{t_1} \mathbf{a}_1 = \frac{z_2}{t_2} \mathbf{a}_2$, with $z_1, z_2, t_1, t_2 \in \mathbb{N}$, and $\mathbf a_i \in S_i$, $i \in\{1,2\}$. Hence, $t_1 t_2 \mathbf{d}' \in \G(S_1)\cap \G(S_2) = \mathbf d\mathbb Z$, that is, $\mathbf d' \in \mathbf d \mathbb{Q}_{\geq 0}$.
\end{proof}

The above result may be also obtained as a consequence of \cite[Lemma 4.2]{thoma}. 

Observe that the inverse statement is not true as the following simple example shows. Let $S$ be semigroup generated by the columns of the matrix $$A = \left(\begin{array}{ccc|ccc} 4 & 3 & 2 & 3 & 1 & 0 \\ 0 & 1 & 2 & 3 & 3 & 4 \end{array}\right)$$ and let $S_1$ and $S_2$ be the semigroups generated by the three first and the three last columns of $A$, repectively. In this case, $\mathbf{d} := (6,6)^\top \in S_1 \cap S_2$ and $\cone(S_1) \cap \cone(S_2) = \mathbf d \mathbb{Q}_{\geq 0}.$ However, $S_1$ and $S_2$ cannot be glued by $\mathbf d$ because $\G(S_1) \cap \G(S_2)$ has rank $2$; indeed, $3 (2,2) = 2 (3,3)$ and $(0,4) = -2 (4,0)+2(3,1)+(2,2)$.

\begin{corollary}\label{carac-to-one-dim}
Let $S $ be an affine semigroup minimally generated by $A$. Let $A=A_1\cup A_2$ be a nontrivial partition of $A$, and let $S_i=\langle A_i\rangle$, $i\in\{1,2\}$. Set $V=\aff(S_1) \cap \aff(S_2)$. Then, $S=S_1 +_\mathbf{d} S_2$ for some $\mathbf{d} \in \mathbb{N}^n \setminus \{0\}$, if and only if $V = \mathbf d \mathbb Q$ and $S \cap V = (S_1 \cap V) +_\mathbf d (S_2 \cap V)$ for some $\mathbf{d} \in \mathbb{N}^n \setminus \{0\}$.
\end{corollary}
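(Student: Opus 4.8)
The plan is to prove the two implications separately, using the cone-theoretic characterization of gluings (Proposition \ref{Prop3}) together with the presentation-theoretic description recalled in Section 1 via \cite[Theorem 1.4]{gluing}, and to treat $V$ both as a $\mathbb{Q}$-vector space and as the ambient lattice for the ``restricted'' semigroups $S_i \cap V$.

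First I would settle the forward implication. Suppose $S = S_1 +_\mathbf{d} S_2$. By definition $\mathbf d \in S_1 \cap S_2$, so $\mathbf d \mathbb{Q} \subseteq \aff(S_1) \cap \aff(S_2) = V$. For the reverse inclusion, note $\aff(S_i) = \G(S_i) \otimes_\mathbb{Z} \mathbb{Q}$, so any $\mathbf v \in V$ can be written $\mathbf v = \mathbf g_1 \otimes q$-style as a $\mathbb{Q}$-combination, and clearing denominators gives an integer $N$ with $N\mathbf v \in \G(S_1) \cap \G(S_2) = \mathbf d \mathbb{Z}$; hence $\mathbf v \in \mathbf d \mathbb{Q}$, proving $V = \mathbf d \mathbb{Q}$. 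For the second condition I would verify that $\mathbf d \in (S_1 \cap V) \cap (S_2 \cap V)$ (immediate, since $\mathbf d \in S_1 \cap S_2$ and $\mathbf d \in V$), and that $\G(S_1 \cap V) \cap \G(S_2 \cap V) = \mathbf d \mathbb{Z}$: the inclusion $\supseteq$ is clear, and for $\subseteq$ observe $\G(S_i \cap V) \subseteq \G(S_i) \cap V$, so the intersection lands in $\G(S_1) \cap \G(S_2) = \mathbf d \mathbb{Z}$. Finally $S \cap V = (S_1 \cap V) + (S_2 \cap V)$: the inclusion $\supseteq$ is trivial; for $\subseteq$, take $\mathbf s \in S \cap V$ and write $\mathbf s = \mathbf s_1 + \mathbf s_2$ with $\mathbf s_i \in S_i$; then $\mathbf s_1 = \mathbf s - \mathbf s_2 \in \G(S_1) \cap \G(S_2) + S_2$... here the argument needs more care, which is why this is the delicate point (see below). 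Granting that $\mathbf s_i \in V$ can be arranged, all conditions of the gluing definition hold for $S \cap V$, giving $S \cap V = (S_1 \cap V) +_\mathbf d (S_2 \cap V)$.

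For the converse, assume $V = \mathbf d \mathbb{Q}$ and $S \cap V = (S_1 \cap V) +_\mathbf d (S_2 \cap V)$. From the latter, $\mathbf d \in (S_1 \cap V) \cap (S_2 \cap V) \subseteq S_1 \cap S_2$, giving the first gluing condition. For the second, $\G(S_1) \cap \G(S_2) \subseteq \aff(S_1) \cap \aff(S_2) = V = \mathbf d \mathbb{Q}$, and since $\G(S_1) \cap \G(S_2)$ is a subgroup of $\mathbb{Z}^n$ contained in the line $\mathbf d \mathbb{Q}$, it is of the form $\frac{1}{q}\mathbf d \mathbb{Z}$ or $0$ for some positive integer $q$; I must rule out $\frac{1}{q}$ with $q > 1$. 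Here I would use that $\mathbf d \in S_1 \cap S_2$ together with the hypothesis $\G(S_1 \cap V)\cap \G(S_2 \cap V) = \mathbf d\mathbb Z$ (part of the gluing of the restricted semigroups): any generator $\frac{1}{q}\mathbf d$ of $\G(S_1)\cap\G(S_2)$ lies in $V$, and writing it as a difference of elements of $S_1$ (resp. $S_2$), those elements, after subtracting a suitable multiple of $\mathbf d$, can be taken inside $V$, landing us in $\G(S_1 \cap V) \cap \G(S_2 \cap V) = \mathbf d\mathbb Z$; hence $q = 1$.

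The main obstacle, common to both directions, is the ``descent to $V$'': given $\mathbf s = \mathbf s_1 + \mathbf s_2 \in V$ with $\mathbf s_i \in S_i$, one wants to replace $(\mathbf s_1, \mathbf s_2)$ by a pair inside $V$. The trick is that $\mathbf s_1 - \mathbf s_2' \in \G(S_1) \cap \G(S_2)$-type reasoning is not directly available, so instead I would argue: since $\mathbf d \in \relintr$-free but $\mathbf d \in S_i$ for $i=1,2$, from $\cone(S_1) \cap \cone(S_2) = \mathbf d \mathbb{Q}_{\geq 0}$ (Proposition \ref{Prop3}, or rather its analogue we are trying to establish) one cannot immediately conclude; the clean route is to use \cite[Theorem 1.4]{gluing}, namely that $S = S_1 +_\mathbf d S_2$ iff a minimal presentation of $S$ is obtained from those of $S_1$ and $S_2$ by adding one relation $(\mathbf a, \mathbf b)$ with $\varphi(\mathbf a) = \varphi(\mathbf b) = \mathbf d$, supported on $A_1$ and $A_2$ respectively. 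Translating ``$S \cap V = (S_1 \cap V)+_{\mathbf d}(S_2\cap V)$'' through this criterion, combined with $V = \mathbf d\mathbb Q$ forcing $S_i \cap V = \langle \mathbf d\rangle$-multiples along a single ray, should make the equivalence fall out, since then $S_i \cap V$ is a numerical-semigroup-like object on the line $\mathbf d\mathbb Q$ and the extra relation $(\mathbf a,\mathbf b)$ for $S$ is exactly the one witnessing $\varphi(\mathbf a)=\varphi(\mathbf b)=\mathbf d$. I expect the bookkeeping of which coordinates vanish (the first $l$ for $\mathbf b$, the last $k-l$ for $\mathbf a$) to be the fussiest part, but no genuinely new idea beyond Proposition \ref{Prop3} and \cite[Theorem 1.4]{gluing} should be needed.
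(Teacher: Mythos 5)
Your forward direction is essentially the paper's argument and is sound, but you stall at the one point that is actually easy. To see that $S\cap V=(S_1\cap V)+(S_2\cap V)$, take $\mathbf s\in S\cap V$ and any decomposition $\mathbf s=\mathbf s_1+\mathbf s_2$ with $\mathbf s_i\in S_i$: since $\mathbf s\in V\subseteq\aff(S_2)$ and $\mathbf s_2\in\aff(S_2)$, we get $\mathbf s_1=\mathbf s-\mathbf s_2\in\aff(S_2)$, and of course $\mathbf s_1\in\aff(S_1)$, so $\mathbf s_1\in V$; symmetrically $\mathbf s_2\in V$. No appeal to \cite[Theorem 1.4]{gluing} is needed, and the detour you sketch through presentations is too vague to count as a proof. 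The rest of what you do in this direction ($V=\mathbf d\mathbb Q$ by clearing denominators, $\mathbf d\in(S_1\cap V)\cap(S_2\cap V)$, and $\G(S_1\cap V)\cap\G(S_2\cap V)=\mathbf d\mathbb Z$ by sandwiching it between $\mathbf d\mathbb Z$ and $\G(S_1)\cap\G(S_2)$) is correct and matches the paper.

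The converse is where the real problem lies, and your proposed fix does not work. You need $\G(S_1)\cap\G(S_2)=\mathbf d\mathbb Z$, and you try to obtain it by pushing a generator $\frac{1}{q}\mathbf d$ of $\G(S_1)\cap\G(S_2)$ into $\G(S_1\cap V)\cap\G(S_2\cap V)$ by ``subtracting a suitable multiple of $\mathbf d$'' from the elements of $S_i$ representing it. But subtracting multiples of $\mathbf d$ from an element of $S_1$ that does not lie on the line $V$ never lands it on $V$, and in general $\G(S_1\cap V)$ is \emph{strictly} smaller than $\G(S_1)\cap V$. Concretely, take $S_1=\langle(4,4),(6,6),(2,1),(3,1)\rangle$ and $S_2=\langle(5,5)\rangle$ in $\mathbb N^2$; then $A_1\cup A_2$ is the minimal generating set of $S=S_1+S_2$, $V=\aff(S_1)\cap\aff(S_2)=(1,1)\mathbb Q$, $S_1\cap V=\langle(4,4),(6,6)\rangle$ and $S_2\cap V=S_2$, so $\G(S_1\cap V)\cap\G(S_2\cap V)=(2,2)\mathbb Z\cap(5,5)\mathbb Z=(10,10)\mathbb Z$ and all the right-hand conditions of the corollary hold with $\mathbf d=(10,10)$; yet $\G(S_1)\cap\G(S_2)=\mathbb Z^2\cap(5,5)\mathbb Z=(5,5)\mathbb Z$ and $(5,5)\notin S_1$, so $S$ is not a gluing of $S_1$ and $S_2$ by any vector. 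Hence the gap in your converse is not a repairable detail: the implication you are trying to prove fails as stated. Be aware that the paper's own proof asserts the equality $\G(S_1)\cap\G(S_2)=\G(S_1\cap V)\cap\G(S_2\cap V)$ at exactly this spot, justified only by ``$\G(S_1)\cap\G(S_2)\subset V$'', which gives containment in $(\G(S_1)\cap V)\cap(\G(S_2)\cap V)$ but not in $\G(S_1\cap V)\cap\G(S_2\cap V)$; so your difficulty is pointing at a genuine defect in the statement and its proof rather than at a missing trick on your part.
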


\begin{proof}
If $S = S_1 +_\mathbf{d} S_2$ for some $\mathbf{d} \in \mathbb{N}^n \setminus \{0\}$, by an argument similar to the given in the proof of Proposition \ref{Prop3}, we have that $V = \mathbf{d} \mathbb{Q}$. Now, since $\mathbf d \in (S_1 \cap V) \cap (S_2 \cap V)$ and $\G(S_1 \cap V) \cap \G(S_2 \cap V) = \G(S_1) \cap \G(S_2) = \mathbf d \mathbb Z$, we conclude that $S \cap V$ is the gluing of $S_1 \cap V$ and $S_2 \cap V$ by $\mathbf{d}$. Conversely, let $V = \mathbf{d} \mathbb{Q}$. Since $\G(S_1) \cap \G(S_2) = \G(S_1 \cap V) \cap \G(S_2 \cap V) = \mathbf d \mathbb Z$ and $\mathbf d \in (S_1 \cap V) \cap (S_2 \cap V) = S_1 \cap S_2$, because $\G(S_1) \cap \G(S_2) \subset V$, we are done.
\end{proof}

Let $S$ be the semigroup generated by the columns of the following matrix 
$$A = \left(\begin{array}{ccc|ccc} 4 & 3 & 2 & 3 & 3 & 3 \\ 0 & 1 & 2 & 3 & 2 & 0 \\ 0 & 0 & 0 & 0 & 1 & 3 \end{array}\right)$$ and let $S_1$ ($S_2$, respectively) be the semigroup generated by the three first (last, respectively) columns of $A$. Clearly, $V = \aff(S_1) \cap \aff(S_2) = (1,1,0)^\top \mathbb{Q}$.
Now, since $S_1 \cap V \cong 2 \mathbb N$, $S_2 \cap V \cong 3 \mathbb N$ and 
$S \cap V \cong 2 \mathbb N +_6 3 \mathbb N$, in the light of the above corollary, we conclude that $S = S_1 +_\mathbf{d} S_2$, with $\mathbf d = (6,6,0)^\top$.

\section{Gluings and Frobenius vectors}

Let $S$ be an affine semigroup. We say that $S$ has a \emph{Frobenius vector} if there exists $\mathbf f\in \G(S)\setminus S$ such that 
\[\mathbf f+\relintr(\cone(S))\cap\G(S)\subseteq S\setminus\{0\}\subseteq S.\]
Notice that $\mathbf f+(\relintr(\cone(S))\cap\G(S))\subseteq S\setminus\{0\}$ is equivalent to $(\mathbf f+\relintr(\cone(S)))\cap\G(S)\subseteq S\setminus\{0\}$, and thus we omit the parenthesis in the above condition.

We are going to prove that if $S_1$ and $S_2$ have Frobenius vectors, then so does $S=S_1+_\mathbf d S_2$. 

\begin{theorem}\label{frob-gluing}
Let $S$ be an affine semigroup. 
Assume that $S=S_1+_\mathbf d S_2$. If $S_1$ and $S_2$ have Frobenius vectors, so does $S$. Moreover, if $\mathbf f_1$ and $\mathbf f_2$ are respectively  Frobenius vectors of $S_1$ and $S_2$, then 
\[\mathbf f=\mathbf f_1+\mathbf f_2+\mathbf d\]
is a Frobenius vector of $S$.
\end{theorem}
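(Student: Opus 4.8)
The plan is to verify that $\mathbf f=\mathbf f_1+\mathbf f_2+\mathbf d$ satisfies the two requirements in the definition of a Frobenius vector for $S$: first that $\mathbf f\in\G(S)\setminus S$, and second that $\mathbf f+\mathbf x\in S\setminus\{0\}$ for every $\mathbf x\in\relintr(\cone(S))\cap\G(S)$. Membership $\mathbf f\in\G(S)$ is immediate since $\G(S)=\G(S_1)+\G(S_2)$ and $\mathbf f_i\in\G(S_i)$, $\mathbf d\in\G(S_i)$. Throughout I would use that $S=S_1+S_2$ (hence $\G(S)=\G(S_1)+\G(S_2)$ and $\cone(S)=\cone(S_1)+\cone(S_2)$), that $\G(S_1)\cap\G(S_2)=\mathbf d\mathbb Z$ and $\aff(S_1)\cap\aff(S_2)=\mathbf d\mathbb Q$ (the latter by Corollary \ref{carac-to-one-dim}), that $\mathbf d\in S_1\cap S_2$, that $\relintr(\cone(S))=\relintr(\cone(S_1))+\relintr(\cone(S_2))$ by Proposition \ref{int-sum}, and the elementary fact, immediate from the generator description of the relative interior, that $\relintr(\cone(S_i))+\cone(S_i)\subseteq\relintr(\cone(S_i))$.

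For the first requirement I would argue by contradiction: suppose $\mathbf f\in S$ and write $\mathbf f=\mathbf s_1+\mathbf s_2$ with $\mathbf s_i\in S_i$. Then $\mathbf f_1-\mathbf s_1=\mathbf s_2-\mathbf f_2-\mathbf d$ lies in $\G(S_1)\cap\G(S_2)=\mathbf d\mathbb Z$, say it equals $\ell\mathbf d$ with $\ell\in\mathbb Z$. If $\ell\ge 0$ then $\mathbf f_1=\mathbf s_1+\ell\mathbf d\in S_1$ (because $\mathbf d\in S_1$), contradicting $\mathbf f_1\notin S_1$; and if $\ell\le -1$ then $\mathbf f_2=\mathbf s_2+(-(\ell+1))\mathbf d\in S_2$, contradicting $\mathbf f_2\notin S_2$. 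Hence $\mathbf f\notin S$.

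For the second requirement, fix $\mathbf x\in\relintr(\cone(S))\cap\G(S)$. By Proposition \ref{int-sum} write $\mathbf x=\mathbf y_1+\mathbf y_2$ with $\mathbf y_i\in\relintr(\cone(S_i))$, and separately write $\mathbf x=\mathbf g_1+\mathbf g_2$ with $\mathbf g_i\in\G(S_i)$. Then $\mathbf y_1-\mathbf g_1=\mathbf g_2-\mathbf y_2$ lies in $\aff(S_1)\cap\aff(S_2)=\mathbf d\mathbb Q$, so $\mathbf g_1=\mathbf y_1-q\mathbf d$ and $\mathbf g_2=\mathbf y_2+q\mathbf d$ for some $q\in\mathbb Q$. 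Put $\mathbf x_1=\mathbf g_1+\lceil q\rceil\mathbf d$ and $\mathbf x_2=\mathbf g_2+(1-\lceil q\rceil)\mathbf d$. These lie in $\G(S_1)$ and $\G(S_2)$ respectively (since $\mathbf d\in S_i$), they satisfy $\mathbf x_1+\mathbf x_2=\mathbf x+\mathbf d$, and since $\mathbf x_1=\mathbf y_1+(\lceil q\rceil-q)\mathbf d$ and $\mathbf x_2=\mathbf y_2+(1-(\lceil q\rceil-q))\mathbf d$ with both coefficients of $\mathbf d$ nonnegative, they lie in $\relintr(\cone(S_1))$ and $\relintr(\cone(S_2))$. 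Applying the Frobenius property of $S_i$ to $\mathbf x_i$ gives $\mathbf f_i+\mathbf x_i\in S_i\setminus\{0\}$, whence $\mathbf f+\mathbf x=(\mathbf f_1+\mathbf x_1)+(\mathbf f_2+\mathbf x_2)\in S\setminus\{0\}$, since a sum of a nonzero vector of $\mathbb N^m$ with any vector of $\mathbb N^m$ is nonzero.

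The step I expect to be the crux is producing the decomposition $\mathbf x+\mathbf d=\mathbf x_1+\mathbf x_2$ in the second requirement. A rational convex combination realizing $\mathbf x$ as an interior point need not respect the lattices $\G(S_i)$, while a group-theoretic decomposition of $\mathbf x$ need not have its two parts inside the cones $\cone(S_i)$; these must be reconciled. The point is that the two decompositions differ only by a rational multiple $q\mathbf d$ of $\mathbf d$, which can be absorbed by rounding $q$ up to $\lceil q\rceil$, and the single leftover copy of $\mathbf d$—distributed as $(\lceil q\rceil-q)\mathbf d$ on the first side and $(1-(\lceil q\rceil-q))\mathbf d$ on the second—is exactly the summand $\mathbf d$ occurring in $\mathbf f=\mathbf f_1+\mathbf f_2+\mathbf d$; this is what forces the term $\mathbf d$ into the formula. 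Everything else is bookkeeping with the identities collected in the first paragraph.
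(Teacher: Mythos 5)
Your proof is correct, and its main half takes a genuinely different route from the paper's. The first part (showing $\mathbf f\notin S$) is essentially identical to the paper's argument: reduce to an element of $\G(S_1)\cap\G(S_2)=\mathbf d\mathbb Z$ and do a sign analysis on the integer multiple. For the second part, however, the paper does not reconcile the two decompositions of $\mathbf x$ the way you do. It writes $\mathbf f+\mathbf x=\mathbf g_1+\mathbf g_2$ with $\mathbf g_i\in\G(S_i)$ and $\mathbf x=\mathbf x_1+\mathbf x_2$ with $\mathbf x_i\in\relintr(\cone(S_i))$, clears denominators by choosing $t$ with $t\mathbf x_i\in S_i$ so that the relevant difference lands in $\G(S_1)\cap\G(S_2)=\mathbf d\mathbb Z$ (say equal to $k\mathbf d$), and then runs a case analysis on $k$ together with a maximality argument (the largest $n$ with $\mathbf g_1-n\mathbf d\in S_1$) to shift multiples of $\mathbf d$ between $\mathbf g_1$ and $\mathbf g_2$ until both pieces land in $S_1$ and $S_2$. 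Your argument instead exploits $\aff(S_1)\cap\aff(S_2)=\mathbf d\mathbb Q$ (available from Corollary \ref{carac-to-one-dim}, or directly by the clearing-denominators trick in Proposition \ref{Prop3}) to see that the cone decomposition and the group decomposition of $\mathbf x$ differ by a single rational multiple $q\mathbf d$, and then rounding $q$ up absorbs the discrepancy, with the leftover $(\lceil q\rceil-q)\mathbf d$ and $(1-(\lceil q\rceil-q))\mathbf d$ staying in the respective cones precisely because the formula carries the extra summand $\mathbf d$. This yields $\mathbf x+\mathbf d=\mathbf x_1+\mathbf x_2$ with $\mathbf x_i\in\relintr(\cone(S_i))\cap\G(S_i)$ and lets you invoke the Frobenius property of each $S_i$ once, with no case analysis. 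What your approach buys is a shorter, more transparent proof that makes the role of the $+\mathbf d$ term conceptually clear; what the paper's approach buys is self-containedness at that point in the text, since it only uses the group-theoretic gluing condition $\G(S_1)\cap\G(S_2)=\mathbf d\mathbb Z$ and does not need the affine-span statement. All the auxiliary facts you use ($\relintr(\cone(S_i))+\cone(S_i)\subseteq\relintr(\cone(S_i))$, Proposition \ref{int-sum}, $\G(S)=\G(S_1)+\G(S_2)$) are available and correctly applied.
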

\begin{proof}
Let $G_1=\G(S_1)$, $G_2=\G(S_2)$, and $G=\G(S)$. Clearly $G=G_1+G_2$, since $S=S_1+S_2$. 

We start by proving that $\mathbf f\in G \setminus S$. As $\mathbf f_1\in G_1$, $\mathbf f_2\in G_2$ and $\mathbf d\in G_1\cap G_2$, we have $\mathbf f\in G$. Assume that $\textbf f\in S$. Then there exist $\mathbf s_1\in S_1$ and $\mathbf s_2\in S_2$ such that $\mathbf f=\mathbf s_1+\mathbf s_2$. Then $\mathbf f_1+\mathbf d-\mathbf s_1=\mathbf s_2-\mathbf f_2\in G_1\cap G_2=\mathbf d\mathbb Z$. So, we can find $k\in \mathbb Z$ such that $\mathbf f_1+\mathbf d-\mathbf s_1=\mathbf s_2-\mathbf f_2=k\mathbf  d$. If $k\le 0$, then $\mathbf f_2=\mathbf s_2-k\mathbf d\in S_2$, a contradiction. If $k>0$, then $\mathbf f_1=\mathbf s_1+(k-1)\mathbf d\in S_1$, which is also impossible, and this proves that $\mathbf f\not\in S$.

In order to simplify the notation, set $C_1=\relintr(\cone(S_1))$, $C_2=\relintr(\cone(S_2))$ and $C=\relintr(\cone(S))$.
Now let us prove that for all $\mathbf x\in C \cap G$, we have that $\mathbf f+\mathbf x\in S$. Since $\mathbf f+\mathbf x\in G$, there must be $\mathbf g_1\in G_1$ and $\mathbf g_2\in G_2$ such that $\mathbf f+\mathbf x=\mathbf g_1+\mathbf g_2$. In light of Proposition \ref{int-sum}, there exists $\mathbf x_1\in C_1$ and $\mathbf x_2\in C_2$ such that $\mathbf x=\mathbf x_1+\mathbf x_2$. Then $\mathbf f+\mathbf x=\mathbf f_1+\mathbf f_2+\mathbf d+\mathbf x_1+\mathbf x_2=\mathbf g_1+\mathbf g_2$. Let $t\in \mathbb Z_{> 0}$ be such that $\mathbf s_1=t\mathbf x_1\in S_1$ and $\mathbf s_2=t\mathbf x_2\in S_2$. This yields $t\mathbf f_1+t\mathbf d+\mathbf s_1-t\mathbf g_1=t\mathbf g_2-t\mathbf f_2-\mathbf s_2=k\mathbf d$ for some integer $k$. Assume that $k\le 0$. Then $t \mathbf f_1+\mathbf s_1+(t-k)\mathbf d=t\mathbf g_1$, and thus $\mathbf f_1+(\mathbf x_1+\frac{t-k}t \mathbf d)=\mathbf g_1$. Observe that $\mathbf x_1+\frac{t-k}t\mathbf d\in C_1$, which implies that $\mathbf g_1\in S_1$ because $\mathbf f_1$ is a Frobenius vector for $S_1$. 

Let $n$ the maximum nonnegative integer such that $\mathbf g_1-nd\in S_1$. 
Hence $\mathbf g_1-(n+1)\mathbf d=\mathbf  f_1+\mathbf x_1+\frac{t-k}t\mathbf d -(n+1)\mathbf d\not\in S_1$, and consequently $tn+k>0$, since otherwise $\frac{t-k}t-(n+1)\ge 0$ and this would lead to $\mathbf x_1+\frac{t-k}t\mathbf d -(n+1)\mathbf d\in C_1$, yielding $\mathbf g_1-(n+1)\mathbf d\in S_1$, a contradiction. Now, $t\mathbf g_2-t\mathbf f_2-\mathbf s_2+tn\mathbf d=(tn+k)\mathbf d$, which means that $\mathbf g_2+n\mathbf d=\mathbf f_2+\mathbf x_2+\frac{tn+k}t\mathbf  d$. As $\mathbf x_2+\frac{tn+k}t\mathbf  d\in C_2$, and $\mathbf f_2$ is a Frobenius vector for $S_2$, we deduce that $\mathbf g_2+n\mathbf d\in S_2$. Finally $\mathbf f+\mathbf x=\mathbf g_1+\mathbf g_2=(\mathbf g_1-n\mathbf d)+(\mathbf g_2+n\mathbf d)\in S_1+S_2=S$.

If $k\ge 0$, then $t \mathbf f_2+\mathbf s_2+t\mathbf d -t\mathbf g_2=t\mathbf g_1-t\mathbf f_2-\mathbf s_1=-k\mathbf d$, and we repeat the above argument by swapping $\mathbf g_1$ and $\mathbf g_2$.
\end{proof}

If $A$ is a set of positive integers, and $S=\langle A\rangle$, then $T=S/\gcd(A)$ is a numerical semigroup, and $\F(T)=\max(\mathbb N\setminus T)$. It follows easily that $\F(S)=\gcd(A)\F(T)$. Recall that the conductor of $T$ is defined as the Frobenius number of $T$ plus one. Hence Theorem \ref{frob-gluing} generalizes the well known formula for the gluing of two submonoids of $\mathbb N$ (\cite[Proposition 10 (i)]{delorme}).

\begin{lemma}\label{frob-indep}
Let $S$ be an affine semigroup minimally generated by $A$. If $A$ is a set of linearly independent elements, then $\mathbf f=-\sum_{\mathbf a\in A}\mathbf  a$ is a Frobenius vector for $S$.
\end{lemma}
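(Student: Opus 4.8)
The plan is to observe that when $A = \{\mathbf a_1, \ldots, \mathbf a_k\}$ is linearly independent, the homomorphism $\varphi \colon \mathbb N^k \to S$ is actually injective, so $S \cong \mathbb N^k$ as a monoid and, more importantly, $\G(S)$ is the free abelian group on $A$ and $\cone(S)$ is a simplicial cone in which every element has a \emph{unique} representation as a rational combination of the $\mathbf a_i$. Thus for $\mathbf z \in \G(S)$ we may write $\mathbf z = \sum_{i=1}^k z_i \mathbf a_i$ with $z_i \in \mathbb Z$ uniquely determined, and $\mathbf z \in S$ if and only if all $z_i \ge 0$; similarly $\mathbf x \in \relintr(\cone(S))$ if and only if in its unique rational expansion $\mathbf x = \sum q_i \mathbf a_i$ all $q_i > 0$. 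The crux is that linear independence turns membership in $S$, in $\G(S)$, and in $\relintr(\cone(S))$ into sign conditions on a single coordinate system.

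With that coordinate description in hand, the verification is a short sign-chasing argument. First I would check $\mathbf f = -\sum_{\mathbf a \in A} \mathbf a \in \G(S) \setminus S$: it clearly lies in $\G(S)$, and its coordinates are all $-1 < 0$, so $\mathbf f \notin S$ (and $\mathbf f \ne 0$ since $A$ is nonempty). Next, take any $\mathbf x \in \relintr(\cone(S)) \cap \G(S)$; write $\mathbf x = \sum_{i=1}^k x_i \mathbf a_i$. Being in $\G(S)$ forces $x_i \in \mathbb Z$, and being in $\relintr(\cone(S))$ forces $x_i > 0$, hence $x_i \ge 1$ for every $i$. Then $\mathbf f + \mathbf x = \sum_{i=1}^k (x_i - 1)\mathbf a_i$ has all coordinates $\ge 0$, so $\mathbf f + \mathbf x \in S$, which is exactly the defining property of a Frobenius vector (note $\mathbf f + \mathbf x = 0$ would require all $x_i = 1$, which is allowed, and $0 \in S$, so the inclusion into $S$ rather than $S \setminus \{0\}$ is what we get, matching the definition stated in the excerpt).

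The only point requiring any care — and the one I expect to be the main (mild) obstacle — is justifying the uniqueness of coordinates, i.e.\ that linear independence of $A$ over $\mathbb Q$ really does give: (i) $\G(S)$ is free on $A$ with the integrality of coordinates well-defined, and (ii) the relative interior of $\cone(S)$ is described coordinate-wise by strict positivity. For (i), if $\sum z_i \mathbf a_i = \sum z_i' \mathbf a_i$ with $z_i, z_i' \in \mathbb Z$ then $\sum (z_i - z_i')\mathbf a_i = 0$ forces $z_i = z_i'$ by linear independence; and $S = \langle A\rangle$ consists exactly of those $\mathbf z$ whose (now unique) coordinates are nonnegative integers, so $\G(S) = S - S$ consists of those with integer coordinates. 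For (ii) one uses the description of $\relintr(\cone(A))$ given before Proposition~\ref{int-sum}: an element is $\sum q_i \mathbf a_i$ with all $q_i > 0$, and by linear independence this representation is unique, so membership in the relative interior is equivalent to all coordinates being $> 0$. I would state these two observations as a brief preliminary paragraph and then the sign computation above finishes the proof in a couple of lines.
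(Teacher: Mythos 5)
Your proof is correct and follows essentially the same route as the paper's: both express $\mathbf x\in\relintr(\cone(S))\cap\G(S)$ simultaneously as a positive rational and an integer combination of the elements of $A$, invoke linear independence to identify the coefficients as integers $\ge 1$, and conclude $\mathbf f+\mathbf x=\sum_{\mathbf a\in A}(z_{\mathbf a}-1)\mathbf a\in S$. Your added verification that $\mathbf f\in\G(S)\setminus S$ and your remark on the uniqueness of coordinates only make explicit what the paper leaves implicit.
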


\begin{proof}
Let $\mathbf x \in \relintr(\cone(S)) \cap G(S)$. Then $\mathbf x=\sum_{\mathbf a\in A}q_\mathbf a\mathbf  a =\sum_{\mathbf a\in A} z_\mathbf a\mathbf  a$, with $q_\mathbf a\in \mathbb Q_{> 0}$ and $z_\mathbf a\in \mathbb Z$ for all $\mathbf a$. Since the elements in $A$ are linearly independent, this forces $z_\mathbf a = q_\mathbf a$ for all $\mathbf a$; in particular, $z_\mathbf a - 1 \geq 0$ for all $\mathbf a$.
Hence $\mathbf f+\mathbf x=\sum_{\mathbf a\in A} (z_\mathbf a-1)\mathbf  a \in S$.
\end{proof}

Since every complete intersection affine semigroup has either no relations (free  in the categorical sense, that is, its minimal set of generators is a set of linearly independent vectors) or it is the gluing of two affine semigroups (\cite{fischer97}), we get the following result.

\begin{theorem}
Let $S$ be a complete intersection affine semigroup. Then $S$ has a Frobenius vector.
\end{theorem}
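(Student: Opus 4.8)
The plan is to proceed by induction on the number of generators of the complete intersection affine semigroup $S$, using the structural characterization of complete intersections as iterated gluings. First I would invoke Fischer--Morris--Thoma (\cite{fischer97}): $S$ is a complete intersection if and only if either its minimal generating set $A$ is a set of linearly independent vectors (equivalently $S\cong\mathbb N^t$ for some $t$, the ``free'' case), or $S=S_1+_\mathbf d S_2$ where $S_1$ and $S_2$ are themselves complete intersection affine semigroups, necessarily generated by proper subsets of (a partition of) $A$. Both $S_1$ and $S_2$ have strictly fewer generators than $S$, so an induction on $|A|$ is available.

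The base case is exactly Lemma \ref{frob-indep}: if $A$ is linearly independent, then $\mathbf f=-\sum_{\mathbf a\in A}\mathbf a$ is a Frobenius vector for $S$, so in particular $S$ has one. For the inductive step, suppose $|A|=k>1$ and $S$ is not free; then $S=S_1+_\mathbf d S_2$ with $S_1,S_2$ complete intersection affine semigroups each minimally generated by a subset of $A$ of size strictly between $0$ and $k$. By the induction hypothesis, $S_1$ and $S_2$ each have a Frobenius vector, say $\mathbf f_1$ and $\mathbf f_2$. Now Theorem \ref{frob-gluing} applies verbatim: since $S=S_1+_\mathbf d S_2$ and both pieces have Frobenius vectors, $S$ has the Frobenius vector $\mathbf f=\mathbf f_1+\mathbf f_2+\mathbf d$. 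This closes the induction, and the theorem follows.

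I expect there to be essentially no obstacle here, since all the heavy lifting has already been done: the arithmetic of the gluing is handled by Theorem \ref{frob-gluing}, the free case by Lemma \ref{frob-indep}, and the recursive decomposition by the cited characterization of complete intersections. The only point requiring a word of care is making sure the induction is on a well-founded quantity --- the embedding dimension $|A|$ --- and that in a gluing $S=S_1+_\mathbf d S_2$ arising from a nontrivial partition $A=A_1\cup A_2$ of the minimal generating system, each $A_i$ is nonempty, so that $|A_i|<|A|$ strictly; this is guaranteed because a gluing is defined via a \emph{nontrivial} partition. One should also note that, along the way, the construction yields an explicit Frobenius vector for any complete intersection, obtained by unwinding the recursion: each leaf of the gluing tree contributes $-\sum_{\mathbf a\in A_\text{leaf}}\mathbf a$ via Lemma \ref{frob-indep}, and each internal node contributes its gluing element $\mathbf d$, so $\mathbf f=-\sum_{\mathbf a\in A}\mathbf a+\sum_{\text{gluings}}\mathbf d_i$; I would mention this explicit description as a remark after the proof.
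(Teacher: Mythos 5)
Your proposal is correct and matches the paper's argument exactly: the authors also deduce the theorem from the Fischer--Morris--Shapiro dichotomy (\cite{fischer97}), using Lemma \ref{frob-indep} for the free case and Theorem \ref{frob-gluing} for the gluing step, with the induction left implicit. The explicit vector $\sum_{i=1}^{t-1}\mathbf d_i-\sum_{\mathbf a\in A}\mathbf a$ that you propose to record as a remark is precisely the paper's formula \eqref{ecu1} in Remark \ref{gluing-ci}.
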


\begin{remark}\label{free}
Let $S=S_1+_\mathbf dS_2$ be the gluing of $S_1$ and $S_2$ by $\mathbf d$, and assume that that $S_2=\langle\mathbf  v\rangle$. Hence $\mathbf d=\theta\mathbf  v$ for some $\theta\in {\mathbb N}$. Clearly $-\mathbf v$ is a Frobenius vector for $S_2$ (Lemma \ref{frob-indep}), and if $S_1$ has a Frobenius vector $\mathbf f_1$, then the formula of Theorem 3 implies that 
$\mathbf f=\mathbf f_1-\mathbf v+\theta\mathbf v=\mathbf f_1+(\theta-1)\mathbf v$ is a Frobenius vector of $S$. More generally let $\mathbf v_1,\ldots,\mathbf v_e$ be a set of ${\mathbb Q}$ linearly independent vectors of ${\mathbb N}^e$. Let $S_0=\langle\mathbf  v_1,\ldots,\mathbf v_e\rangle$, and let $\mathbf v_{e+1},\ldots,\mathbf v_{e+h}$ be a set of vectors of ${\mathbb N}^e\cap \cone(\mathbf v_1,\ldots,\mathbf  v_e)$. Set $S_i=\langle \mathbf v_1,\ldots,\mathbf v_{e+i}\rangle $ for all $1\leq i\leq h$ and assume that $S_i=S_{i-1}+_{\theta_i\mathbf v_i}\langle\mathbf  v_{i}\rangle$ (such semigroups are called free semigrous). A Frobenius vector $\mathbf f_0$ of $S_0$ being $\mathbf f_0=-\sum_{k=1}^e\mathbf v_k$ (Lemma \ref{frob-indep}), it follows that 
\begin{equation}\label{ecu2}
\mathbf f_i=\sum_{j=1}^i(\theta_j-1)\mathbf v_j-\sum_{k=1}^e\mathbf v_k
\end{equation}
is a Frobenius vector of $S_i$. This formula has also been proved by the first author in \cite{free}, and gave the following uniqueness condition: this Frobenius vector $\mathbf f$ is minimal with respect to the order induced by $\cone(S)$, that is, for every other Frobenius vector $\mathbf f'$ of $S$, $\mathbf f'\in \mathbf f+\cone(S)$.

We recall that a reduced affine semigroup $S$ is said to be \emph{simplicial} if there are linearly independent elements
$\mathbf{a}_1, \ldots, \mathbf{a}_n \in S$ such that $\cone(S) = \cone(\mathbf{a}_1, \ldots, \mathbf{a}_n)$. Under this hypothesis, conditions for  the existence and conditions for  uniqueness of a Frobenius vector of $S$ are given in \cite{ponomarenko}.
\end{remark}

The formula (\ref{ecu2}) is a special case of the following general formula for a Frobenius vector of a complete intersection affine semigroup.

\begin{remark}\label{gluing-ci}
Recall that according to \cite{fischer97}, any complete intersection affine semigroup is either generated by a set of linearly independent vectors or it is a gluing of two complete intersection numerical semigroups. Thus, repeating this argument recursively, if $S$ is a complete intersection affine semigroup $A$, then there exists a partition $A_1\cup \cdots \cup A_t=A$ such that $A_i$ are sets of linearly independent vectors and 
\[S=S_1+_{\mathbf d_1} S_2+_{\mathbf d_2}\cdots +_{\mathbf d_{t-1}} S_t,\] 
with $S_i=\langle A_i \rangle$. From Theorem \ref{frob-gluing} and Lemma \ref{frob-indep}, it follows that 
\begin{equation}\label{ecu1}
\sum_{i=1}^{t-1} \mathbf d_i -\sum_{\mathbf a\in A}\mathbf  a
\end{equation}
is a Frobenius vector for $S$.
\end{remark}

Next we show that this Frobenius vector is unique in the sense defined above. 

%

\begin{proposition}\label{f-mas-caras}
Let $S$ be a complete intersection affine semigroup and let $\mathbf f$ be defined as in \eqref{ecu1}. Then for every face $F$ of $\cone(S)$, $(\mathbf f+F)\cap S$ is empty.
\end{proposition}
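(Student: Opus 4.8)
The plan is to reduce the statement to the case of a face of the full cone by an induction on the number $t$ of linearly independent blocks $A_i$, using the gluing decomposition $S = S_1 +_{\mathbf d_1} S_2 +_{\mathbf d_2} \cdots +_{\mathbf d_{t-1}} S_t$. The base case $t=1$ is the situation where $S = \langle A\rangle$ is free on the linearly independent set $A$, with $\mathbf f = -\sum_{\mathbf a\in A}\mathbf a$. In that case a face $F$ of $\cone(A)$ is exactly $\cone(B)$ for some subset $B\subseteq A$ (faces of a simplicial cone are the subcones spanned by subsets of its rays), and any point of $\mathbf f + F$ has the form $-\sum_{\mathbf a\in A}\mathbf a + \sum_{\mathbf a\in B} q_{\mathbf a}\mathbf a$; if this lay in $S$, then writing it in the basis $A$ would force the coefficient of any $\mathbf a\in A\setminus B$ to be $-1<0$ (using $A\setminus B\neq\emptyset$, since $F$ is a proper face — note $\mathbf c\neq 0$), contradicting membership in $S\subseteq \mathbb N$-combinations of $A$.

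For the inductive step, write $S = S' +_{\mathbf d} S_t$ where $S' = S_1 +_{\mathbf d_1}\cdots+_{\mathbf d_{t-2}} S_{t-1}$ has Frobenius vector $\mathbf f' = \sum_{i=1}^{t-2}\mathbf d_i - \sum_{\mathbf a\in A'}\mathbf a$ (with $A' = A_1\cup\cdots\cup A_{t-1}$), so that $\mathbf f = \mathbf f' + \mathbf f_t + \mathbf d$ with $\mathbf f_t = -\sum_{\mathbf a\in A_t}\mathbf a$ and $\mathbf d = \mathbf d_{t-1}$. Let $F$ be a face of $\cone(S)$, cut out by $\mathbf c\in\mathbb Q^m\setminus\{0\}$. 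By Proposition~\ref{suma-exteriores}, every $\mathbf x\in F$ decomposes as $\mathbf x = \mathbf x' + \mathbf x_t$ with $\mathbf x'$ in a face $F'$ of $\cone(S')$ and $\mathbf x_t$ in a face $F_t$ of $\cone(S_t)$ — indeed $F' = F\cap\cone(S')$ and $F_t = F\cap\cone(S_t)$ are the faces cut out by the same $\mathbf c$. Suppose for contradiction that $\mathbf f + \mathbf x \in S$ for some $\mathbf x\in F$; write $\mathbf f+\mathbf x = \mathbf s' + \mathbf s_t$ with $\mathbf s'\in S'$, $\mathbf s_t\in S_t$. Then, exactly as in the proof of Theorem~\ref{frob-gluing}, $\mathbf f' + \mathbf d - \mathbf s' + \mathbf x' = \mathbf s_t - \mathbf f_t - \mathbf x_t \in \G(S')\cap\G(S_t) = \mathbf d\mathbb Z$; I would then run the same case analysis on the sign of the resulting integer multiple of $\mathbf d$ to produce, after shifting by a suitable multiple of $\mathbf d$ (which stays inside $\cone(S')$ resp.\ $\cone(S_t)$ since $\mathbf d$ lies in both cones), either a point of $\mathbf f' + F'$ in $S'$ or a point of $\mathbf f_t + F_t$ in $S_t$.

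The subtle point — and the one I expect to be the main obstacle — is that shifting by multiples of $\mathbf d$ must be controlled so as to land in a \emph{face}, not merely in the cone: when I add $\alpha\mathbf d$ with $\alpha\geq 0$ to $\mathbf x'$, I need $\mathbf x' + \alpha\mathbf d$ to stay in $F'$, which holds because $\mathbf c\cdot\mathbf d = \mathbf c\cdot\mathbf x' + \mathbf c\cdot\mathbf x_t$ and both summands are $\geq 0$ while the total at a point of $F$ is $0$, hence $\mathbf c\cdot\mathbf d\geq 0$; but I also need to know $\mathbf d\in\cone(S')\cap\cone(S_t)$, which is Proposition~\ref{Prop3}, and that $\mathbf c\cdot\mathbf d = 0$ precisely when this is needed. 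Combined with the Frobenius-vector property of $\mathbf f'$ for $S'$ (inductive hypothesis in the form "$(\mathbf f'+F')\cap S' = \emptyset$ for every face $F'$") and of $\mathbf f_t$ for $S_t$ (the base case applied to the free semigroup $S_t$), either outcome contradicts the assumption. One technicality to handle carefully: when $\mathbf x' = 0$ (i.e.\ $F'$ could be taken as $\{0\}$, which is not literally a face per the paper's definition requiring $\mathbf c\neq 0$ and $F\neq\emptyset$ — but $\{0\}$ is a face of any pointed cone), one should observe that the statement "$(\mathbf f'+\{0\})\cap S' = \emptyset$" is just $\mathbf f'\notin S'$, which holds since $\mathbf f'$ is a Frobenius vector; so I would either broaden the inductive claim to include the trivial face or treat $\mathbf x'=0$ separately. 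Once the sign analysis and these boundary cases are dispatched, the contradiction is immediate and the induction closes.
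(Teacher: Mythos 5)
Your overall strategy is the same as the paper's: induct on the gluing decomposition, handle the free case via the linear independence of $A$, use Proposition \ref{suma-exteriores} to split $\mathbf x\in F$ as $\mathbf x_1+\mathbf x_2$ with $\mathbf x_i$ in a face $F_i$ of $\cone(S_i)$, and do a sign analysis on the integer $k$ with $\mathbf f_1+\mathbf x_1-\mathbf s_1=k\mathbf d$. Your base case and your handling of the trivial face $\{0\}$ are fine (indeed more careful than the paper, which dismisses the free case with ``clearly''). But the step you flag as ``the main obstacle'' is a genuine dead end as you have set it up: you propose to add $\alpha\mathbf d$ to $\mathbf x'$ and keep the result in the face $F'$, which would require $\mathbf c\cdot\mathbf d=0$. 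That is false in general --- $\mathbf d\in S_1\cap S_2$ typically lies in the relative interior of $\cone(S)$ (see the remark after Proposition \ref{int-sum}), so for a proper face $F$ one expects $\mathbf c\cdot\mathbf d>0$, and $\mathbf x'+\alpha\mathbf d$ leaves $F'$ immediately. Your plan, as written, cannot be completed along that route.

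The fix, and what the paper actually does, is to never move the face elements at all: all multiples of $\mathbf d$ are absorbed on the \emph{semigroup} side of the equation. From $\mathbf f_1+\mathbf x_1-\mathbf s_1=\mathbf s_2-\mathbf f_2-\mathbf d-\mathbf x_2=k\mathbf d$, if $k\ge 0$ then $\mathbf f_1+\mathbf x_1=\mathbf s_1+k\mathbf d\in S_1$ because $\mathbf d\in S_1$, contradicting the inductive hypothesis $(\mathbf f_1+F_1)\cap S_1=\emptyset$ applied to the untouched $\mathbf x_1\in F_1$; hence $k<0$, whence $\mathbf f_2+\mathbf x_2=\mathbf s_2-(k+1)\mathbf d$ with $-(k+1)\ge 0$, so $\mathbf f_2+\mathbf x_2\in S_2$, contradicting the hypothesis for $F_2$. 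No control of $\mathbf c\cdot\mathbf d$ and no membership of shifted points in faces is ever needed. (One shared loose end: both you and the paper assert $k\in\mathbb Z$, while $\mathbf x_1,\mathbf x_2$ are a priori only rational vectors; strictly one should clear denominators as in the proof of Theorem \ref{frob-gluing}, or invoke $\aff(S_1)\cap\aff(S_2)=\mathbf d\mathbb Q$ from Corollary \ref{carac-to-one-dim} and then adjust, but this affects both arguments equally.)
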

\begin{proof}
Since either $S$ is free or the gluing of two complete intersection affine semigroups $S_1$ and $S_2$, we proceed by induction. If $S$ is free, then Lemma \ref{frob-indep} asserts that $\mathbf f=-\sum_{\mathbf a\in A}\mathbf  a$, with $A$ the minimal generating set of $S$. Clearly in this case the assertion is true.

Now assume that $S=S_1+_\mathbf d S_2$ for some $\mathbf d\in S_1\cap S_2$. From Theorem \ref{frob-gluing}, $\mathbf f=\mathbf f_1+\mathbf f_2+\mathbf d$, where $\mathbf f_i$, $i\in\{1,2\}$, is also defined by \eqref{ecu1}. By induction hypothesis, for every face $F_i$ of $\cone(S_i)$, $i\in\{1,2\}$, $(\mathbf f_i+F_i)\cap S_i=\emptyset$. 

Assume to the contrary that there exists $\mathbf x\in F$ such that $\mathbf f_1+\mathbf f_2+\mathbf d+\mathbf x\in S$. According to Proposition \ref{suma-exteriores}, there exists $\mathbf x_i\in F_i$, $i\in\{1,2\}$, such that $\mathbf x=\mathbf x_1+\mathbf x_2$, for some face $F_i$ of $\cone(S_i)$. Hence there are $\mathbf s_1\in S_1$ and $\mathbf s_2\in S_2$ such that $\mathbf f_1+\mathbf f_2+\mathbf d+\mathbf x_1+\mathbf x_2= \mathbf s_1+\mathbf s_2$. 
Then $\mathbf f_1+\mathbf x_1-\mathbf s_1=\mathbf s_2-\mathbf f_2-\mathbf d-\mathbf x_2=k \mathbf d$ for some integer $k$. As by induction hypothesis, $\mathbf f_1+\mathbf x_1\not\in S_1$, we deduce $k<0$. Therefore $\mathbf f_2+\mathbf x_2=\mathbf s_2-(k+1)\mathbf d$. But $\mathbf f_2+\mathbf x_2\not\in S_2$, which forces $k+1>0$, or equivalently $k\ge 0$. But this is in contradiction with $k<0$.
\end{proof}

\begin{theorem}
Let $S$ be a complete intersection and let $\mathbf f$ be as in \eqref{ecu1}. Assume that $\mathbf f'$ is another Frobenius vector of $S$. Then $\mathbf f'\in \mathbf f+\cone(S)$.
\end{theorem}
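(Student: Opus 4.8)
The plan is to argue by contradiction, using Proposition~\ref{f-mas-caras} as the essential input. Suppose $\mathbf{f}'$ is a Frobenius vector of $S$ but $\mathbf{w}:=\mathbf{f}'-\mathbf{f}\notin\cone(S)$; note $\mathbf{w}\in\G(S)\subseteq\aff(S)$. Since $\cone(S)=\cone(A)$ (with $A$ the minimal generating set) is a pointed rational cone that is full dimensional inside $\aff(S)$, it has an irredundant description $\cone(S)=\{\mathbf{y}\in\aff(S)\mid h_1(\mathbf{y})\ge 0,\ldots,h_p(\mathbf{y})\ge 0\}$ by rational linear functionals $h_i$, whose facets are $F_i=\{\mathbf{y}\in\cone(S)\mid h_i(\mathbf{y})=0\}$. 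Because $\mathbf{w}\notin\cone(S)$, there is an index $j$ with $h_j(\mathbf{w})<0$; fix this facet $F_j$, which is in particular a face of $\cone(S)$ in the sense of the paper.

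The heart of the proof is to manufacture, from $F_j$, an element $\mathbf{z}\in\relintr(\cone(S))\cap\G(S)$ with $\mathbf{f}'+\mathbf{z}\in\mathbf{f}+F_j$. Once this is done we are finished: $\mathbf{f}'+\mathbf{z}\in S$ because $\mathbf{f}'$ is a Frobenius vector, whereas $(\mathbf{f}+F_j)\cap S=\emptyset$ by Proposition~\ref{f-mas-caras}, a contradiction. Recall that every face of $\cone(A)$ is generated by the generators lying on it, so $F_j=\cone(A\cap F_j)$; put $\mathbf{w}_j=\sum_{\mathbf{a}\in A\cap F_j}\mathbf{a}$ (with the convention $\mathbf{w}_j=0$ when $A\cap F_j=\emptyset$, which can only happen if $\dim\aff(S)=1$ and $F_j=\{0\}$). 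The candidate is $\mathbf{z}=\mathbf{z}_t:=t\,\mathbf{w}_j-\mathbf{w}$ for a large positive integer $t$. It lies in $\G(S)$ since $t\,\mathbf{w}_j\in S$ and $\mathbf{w}\in\G(S)$, and by construction $\mathbf{f}'+\mathbf{z}_t=\mathbf{f}+t\,\mathbf{w}_j\in\mathbf{f}+F_j$ because $t\,\mathbf{w}_j\in\cone(A\cap F_j)\subseteq F_j$.

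It remains to check that $\mathbf{z}_t\in\relintr(\cone(S))$ for $t$ large, i.e.\ that $h_i(\mathbf{z}_t)>0$ for every $i$. For $i=j$ we get $h_j(\mathbf{z}_t)=t\,h_j(\mathbf{w}_j)-h_j(\mathbf{w})=-h_j(\mathbf{w})>0$, since $\mathbf{w}_j\in F_j$. For $i\ne j$ we have $h_i(\mathbf{w}_j)=\sum_{\mathbf{a}\in A\cap F_j}h_i(\mathbf{a})\ge 0$, and this sum is strictly positive: vanishing would force $A\cap F_j\subseteq F_i$, hence $F_j=\cone(A\cap F_j)\subseteq F_i$, impossible for two distinct facets. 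Therefore $h_i(\mathbf{z}_t)=t\,h_i(\mathbf{w}_j)-h_i(\mathbf{w})>0$ as soon as $t$ is large enough. (In the degenerate case $\dim\aff(S)=1$ there are no constraints with $i\ne j$, and $\mathbf{z}_t=-\mathbf{w}=\mathbf{f}-\mathbf{f}'$ already lies in $\relintr(\cone(S))\cap\G(S)$, so $\mathbf{f}'+\mathbf{z}_t=\mathbf{f}\notin S$, again contradicting that $\mathbf{f}$ is a Frobenius vector.) This proves $\mathbf{w}\in\cone(S)$, i.e.\ $\mathbf{f}'\in\mathbf{f}+\cone(S)$.

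The only delicate point is the polyhedral bookkeeping: choosing the facet $F_j$ that separates $\mathbf{w}$ from $\cone(S)$, recognizing $F_j=\cone(A\cap F_j)$, and isolating the one-dimensional degenerate situation; there is nothing deep here, and once $F_j$ is fixed the element $t\,\mathbf{w}_j-\mathbf{w}$ is essentially forced. Everything substantive has already been packaged into Proposition~\ref{f-mas-caras}.
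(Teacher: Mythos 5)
Your proof is correct, and it reaches the contradiction by a genuinely different mechanism than the paper, although both arguments share the same skeleton: assume $\mathbf f'\notin\mathbf f+\cone(S)$ and then exhibit an element of $\G(S)$ lying simultaneously in $\mathbf f+F$ for some face $F$ and in $\mathbf f'+\relintr(\cone(S))$, so that it must belong to $S$ (Frobenius property of $\mathbf f'$) and must not (Proposition \ref{f-mas-caras}). The paper builds this witness from the generator description of the cone: it chooses a point $\mathbf x=\mathbf a+\mathbf a'+\mathbf c$ lying in both $\mathbf f+\relintr(\cone(S))$ and $\mathbf f'+\relintr(\cone(S))$, lets the segment from $\mathbf f'$ to $\mathbf x$ exit $\mathbf f+\cone(S)$ through a face at a rational point $\mathbf f+\mathbf y$, and then rescales $\mathbf y$ by a positive integer $k$ to land in $\G(S)$ while remaining in $\mathbf f'+\relintr(\cone(S))$. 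You instead pass to the facet (inequality) description, isolate a facet inequality $h_j$ violated by $\mathbf f'-\mathbf f$, and write down the explicit witness $\mathbf f+t\,\mathbf w_j$ with $\mathbf w_j$ the sum of the minimal generators lying on $F_j$. Your route avoids the paper's somewhat informal ``the segment cuts some face'' step and the subsequent rescaling, at the cost of importing standard polyhedral facts: the existence of an irredundant rational facet description, the identity $F=\cone(A\cap F)$, the impossibility of one facet containing another, and the identification of the paper's combinatorial $\relintr$ with $\{h_i>0\}$ at rational points (the last of which the paper itself asserts without proof, so you are entitled to it). Your handling of the one-dimensional degenerate case, where $A\cap F_j=\emptyset$ and the witness degenerates to $\mathbf f$ itself, is also correct. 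Both arguments are legitimate; yours is more explicit and constructive at the final step, while the paper's stays entirely within the vocabulary it has already set up.
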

\begin{proof}
Write $\mathbf f=\mathbf a-\mathbf b$ and $\mathbf f'=\mathbf a'-\mathbf b'$ with $\mathbf a,\mathbf a', \mathbf b, \mathbf b'\in S$, and let $\mathbf c\in \relintr(\cone(S))$. Then $\mathbf x=\mathbf f+\mathbf b+\mathbf a'+\mathbf c=\mathbf f'+\mathbf b'+\mathbf a+\mathbf c\in (\mathbf f+\relintr(\cone(S)))\cap(\mathbf f'+\relintr(\cone(S)))$. 

Assume that $\mathbf f'\not\in \mathbf f+\cone(S)$. Then the segment joining $\mathbf f'$ and $\mathbf x$ cuts some face of $\mathbf f+ \cone(S)$. Denote by $\mathbf f+F$ this face and let $\mathbf f+\mathbf y$ be this intersection point ($\mathbf y\in F$ and $F$ is a face of $\cone(S)$). There exists a positive integer $k$ such that $k\mathbf y$ is in $S$, and thus $\mathbf f+k\mathbf y\in \G(S)\cap (\mathbf f+F)$. Notice that  $\mathbf f+\mathbf y=\mathbf f'+\mathbf y'$ for some $\mathbf y'\in \relintr(\cone(S))$. As $\mathbf y\in F$, $(k-1)\mathbf y\in \cone(S)$, and consequently $\mathbf f+k\mathbf y=\mathbf f'+(\mathbf y'+(k-1)\mathbf y)\in \mathbf f'+\relintr(\cone(S))$. Hence  $\mathbf f+k\mathbf y\in (\mathbf f'+\relintr(\cone(S)))\cap \G(S)\subseteq S$, in contradiction with Proposition \ref{f-mas-caras}.
\end{proof}

\section{Gluings and Hilbert series}

The \emph{Hilbert series} of $S$ is the Hilbert series associated to $K[S]$: $\HH(S,\mathbf x)=\sum_{\mathbf s\in S} \mathbf x^\mathbf s$, where for $\mathbf s=(s_1,\ldots,s_m)\in \mathbb N^m$, $\mathbf x^\mathbf s=x_1^{s_1}\cdots x_m^{s_m}$. This map is sometimes known in the literature as generating function of $S$, and it has been shown to be of the form $g(S,\mathbf x)/\prod_{\mathbf a\in A}(1-\mathbf x^\mathbf a)$, with $A$ the minimal generating set of $S$ (see \cite[\S 7.3]{bw}).

The next lemma is a straightforward generalization of (4) in \cite{ra-r}.

\begin{lemma}\label{hilbert-ap}
Let $S$ be an affine semigroup and let $\mathbf m\in S\setminus\{0\}$. Then
\begin{equation}\label{hilbert-apery}
\HH(S,x)=\frac{1}{1-x^\mathbf m}\sum_{\mathbf w\in \Ap(S,\mathbf m)} x^\mathbf w.
\end{equation}
\end{lemma}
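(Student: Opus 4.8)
The statement to prove is Lemma~\ref{hilbert-ap}: for an affine semigroup $S$ and $\mathbf m\in S\setminus\{0\}$, we have $\HH(S,x)=\frac{1}{1-x^\mathbf m}\sum_{\mathbf w\in \Ap(S,\mathbf m)} x^\mathbf w$.

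The key observation is that the Apéry set $\Ap(S,\mathbf m)$ provides a system of coset representatives for the action of $\mathbb N\mathbf m$ on $S$ by translation. Specifically, I want to establish that every $\mathbf s\in S$ can be written uniquely as $\mathbf s=\mathbf w+n\mathbf m$ with $\mathbf w\in\Ap(S,\mathbf m)$ and $n\in\mathbb N$.

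Here is the plan. First I would prove the decomposition: given $\mathbf s\in S$, let $n$ be the largest nonnegative integer with $\mathbf s-n\mathbf m\in S$ (this exists: the set of such $n$ contains $0$, and it is bounded because, e.g., working coordinatewise in $\mathbb N^m$ with $\mathbf m\neq 0$, the coordinates of $\mathbf s-n\mathbf m$ cannot stay nonnegative forever — pick a coordinate where $\mathbf m$ is positive). Set $\mathbf w=\mathbf s-n\mathbf m$. Then $\mathbf w\in S$ by choice of $n$, and $\mathbf w-\mathbf m\notin S$ by maximality of $n$, so $\mathbf w\in\Ap(S,\mathbf m)$. For uniqueness, suppose $\mathbf w+n\mathbf m=\mathbf w'+n'\mathbf m$ with $\mathbf w,\mathbf w'\in\Ap(S,\mathbf m)$, $n,n'\in\mathbb N$, and without loss of generality $n\ge n'$; then $\mathbf w=\mathbf w'+(n-n')\mathbf m$, and if $n>n'$ this gives $\mathbf w-\mathbf m=\mathbf w'+(n-n'-1)\mathbf m\in S$, contradicting $\mathbf w\in\Ap(S,\mathbf m)$; hence $n=n'$ and then $\mathbf w=\mathbf w'$.

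With the bijection $S\longleftrightarrow \Ap(S,\mathbf m)\times\mathbb N$, $\mathbf s\leftrightarrow(\mathbf w,n)$, I would then compute formally
\[
\HH(S,x)=\sum_{\mathbf s\in S}x^{\mathbf s}=\sum_{\mathbf w\in\Ap(S,\mathbf m)}\sum_{n\in\mathbb N}x^{\mathbf w+n\mathbf m}=\Big(\sum_{\mathbf w\in\Ap(S,\mathbf m)}x^{\mathbf w}\Big)\Big(\sum_{n\in\mathbb N}x^{n\mathbf m}\Big)=\frac{1}{1-x^{\mathbf m}}\sum_{\mathbf w\in\Ap(S,\mathbf m)}x^{\mathbf w},
\]
using $x^{\mathbf w+n\mathbf m}=x^{\mathbf w}(x^{\mathbf m})^n$ and the geometric series $\sum_{n\ge 0}(x^{\mathbf m})^n=\tfrac{1}{1-x^{\mathbf m}}$. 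I should remark that this is an identity of formal power series (or of rational functions / generating functions, as $\HH(S,x)$ is defined), so no convergence issues arise; the rearrangement of the double sum is legitimate because it is simply a reindexing along the bijection, each monomial $x^{\mathbf s}$ appearing exactly once on each side.

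The main obstacle — really the only nontrivial point — is justifying that the maximal $n$ with $\mathbf s-n\mathbf m\in S$ exists, i.e. that one cannot subtract $\mathbf m$ indefinitely; this is where reducedness of $S$ (equivalently, $S\subseteq\mathbb N^m$ with $\mathbf m\neq 0$) is used. Everything else is bookkeeping. One could alternatively phrase the argument as: $S$ is the disjoint union $\bigsqcup_{\mathbf w\in\Ap(S,\mathbf m)}(\mathbf w+\mathbb N\mathbf m)$, and the claim follows by summing the geometric series for each piece.
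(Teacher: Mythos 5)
Your proof is correct and follows essentially the same route as the paper: establish that every $\mathbf s\in S$ decomposes uniquely as $\mathbf w+n\mathbf m$ with $\mathbf w\in\Ap(S,\mathbf m)$ and $n\in\mathbb N$, then factor the resulting double sum and apply the geometric series. The only difference is that you spell out the existence and uniqueness of the decomposition (which the paper dismisses as following ``directly from the definition''), and your justification there is sound.
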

\begin{proof}
It follows directly from the definition of $\Ap(S,\mathbf m)$, that for every $\mathbf s\in S$, there exist unique $k\in \mathbb N$ and $\mathbf w\in \Ap(S,\mathbf m)$ such that $\mathbf s=k\mathbf m+\mathbf w$. Hence 
\[\HH(S,\mathbf x)=\sum_{k\in \mathbb N,\mathbf  w\in \Ap(S,\mathbf m)} \mathbf x^{k\mathbf m+\mathbf w}= \sum_{k\in \mathbb N}(\mathbf x^\mathbf m)^\mathbf k\sum_{\mathbf w\in \Ap(S,\mathbf m)}x^\mathbf w.\]
The proof follows by taking into account that $\sum_{k\in \mathbb N}(\mathbf x^\mathbf m)^\mathbf k = 1/(1-\mathbf x^\mathbf m)$.
\end{proof}

The following result can also be understood as a generalization of (4) in \cite{ra-r}, since for simplicial affine semigroups that are Cohen-Macaulay the set $\bigcap_{i=1}^m \Ap(S,\mathbf v_i)$, with $\mathbf v_1,\ldots,\mathbf v_m$ a set of extremal rays of $S$, plays a similar role to the Ap\'ery set of an element in a numerical semigroup (compare \cite[Theorem 1.5]{cm} and \cite[Lemma 2.6]{ns-book}).

\begin{proposition}
Let $S$ be a simplicial affine semigroup with extremal rays $\mathbf v_1,\ldots,\mathbf  v_m$. Then $\HH(S,\mathbf x)=\frac{P(\mathbf x)}{\prod_{i=1}^m(1-\mathbf x^{\mathbf v_i})}$, with $P(\mathbf x)$ a polynomial.
\end{proposition}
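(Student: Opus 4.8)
The plan is to reduce to Lemma \ref{hilbert-ap} by finding a single element $\mathbf m\in S\setminus\{0\}$ whose Apéry set is ``finite modulo the extremal rays.'' More precisely, first I would observe that since $S$ is simplicial with extremal rays $\mathbf v_1,\ldots,\mathbf v_m$, the cone $\cone(S)=\cone(\mathbf v_1,\ldots,\mathbf v_m)$ is full-dimensional in $\aff(S)$ and the $\mathbf v_i$ form a $\mathbb Q$-basis of $\aff(S)$. Set $\mathbf m=\mathbf v_1+\cdots+\mathbf v_m$. The key geometric fact is that the ``box'' $\mathcal B=\{\sum_{i=1}^m q_i\mathbf v_i\mid 0\le q_i<1\}$ is a fundamental domain for translation by the lattice $L=\mathbb Z\mathbf v_1+\cdots+\mathbb Z\mathbf v_m\subseteq \G(S)$, and $\mathbf m$ lies in the interior of the cone, so any element of $S$ can be written, after subtracting a suitable nonnegative combination of the $\mathbf v_i$, as something lying in a bounded region. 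The step I expect to be the main obstacle is establishing that $\Ap(S,\mathbf m)$ is contained in a bounded set — i.e. that for every $\mathbf s\in S$ there is $k\in\mathbb N$ with $\mathbf s-k\mathbf m\in S$ and $\mathbf s-k\mathbf m$ lying in a fixed bounded region — and deducing from this that $\Ap(S,\mathbf m)$ is \emph{finite}.

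Here is how I would handle that obstacle. Write $\mathbf s=\sum q_i\mathbf v_i$ with $q_i\in\mathbb Q_{\ge 0}$ (possible since $\mathbf s\in\cone(S)$), and let $k=\min_i\lfloor q_i\rfloor$; I claim $\mathbf s-k\mathbf m\in S$. Indeed $\mathbf s-k\mathbf m=\sum (q_i-k)\mathbf v_i$ with all coefficients $\ge 0$, so $\mathbf s-k\mathbf m\in\cone(S)\cap\G(S)$. To conclude it lies in $S$ I need that $S$ contains every lattice point of the cone that is ``close enough'' to the boundary in each coordinate — this is exactly where the finiteness of the complement of $S$ inside $\cone(S)\cap\G(S)$ (equivalently, that $S$ generates a finite-index... no, rather a cofinite sub-object) must be invoked. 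Actually the clean statement is: $\cone(S)\cap\G(S)\setminus S$ need not be finite in general, so instead I would argue directly that $\Ap(S,\mathbf m)$ is finite by a different route. Since $m\mathbf m=m(\mathbf v_1+\cdots+\mathbf v_m)$ and $m\mathbf v_i\in\ldots$ — more robustly: each $\mathbf v_i\in S$, hence $\mathbf m\in S$, and if $\mathbf w\in\Ap(S,\mathbf m)$ then in the expression $\mathbf w=\sum q_i\mathbf v_i$ (with $q_i\in\mathbb Q_{\ge0}$) we must have $\min_i\lfloor q_i\rfloor=0$, for otherwise $\mathbf w-\mathbf m=\sum(q_i-1)\mathbf v_i\in\cone(S)$ and a power argument would force $\mathbf w-\mathbf m\in S$. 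This bounds $\mathbf w$ only in one coordinate, not all, so $\Ap(S,\mathbf m)$ can indeed be infinite, and the naive approach with Lemma \ref{hilbert-ap} fails.

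Given this, the correct plan is instead to use the intersection of Apéry sets $W=\bigcap_{i=1}^m\Ap(S,\mathbf v_i)$, which for a \emph{Cohen-Macaulay} simplicial semigroup is finite; but the proposition as stated does not assume Cohen-Macaulayness. So the actual argument I would carry out is algebraic: $K[S]$ is a finitely generated module over the polynomial subring $R=K[t^{\mathbf v_1},\ldots,t^{\mathbf v_m}]$, because $\cone(S)=\cone(\mathbf v_1,\ldots,\mathbf v_m)$ forces every element of $S$ to equal some element of a fixed finite set $W\subseteq S$ plus a nonnegative integer combination of the $\mathbf v_i$ (take $W$ = the finitely many points of $S$ lying in the half-open parallelepiped spanned by the $\mathbf v_i$; finiteness holds because that parallelepiped is bounded and $\G(S)$ is a lattice). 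Hence $\HH(S,\mathbf x)=\sum_{\mathbf w\in W}\mathbf x^{\mathbf w}\big/\prod_{i=1}^m(1-\mathbf x^{\mathbf v_i})$ after clearing, so $P(\mathbf x)=\left(\sum_{\mathbf w\in W}\mathbf x^{\mathbf w}\right)\prod(\text{correction terms})$ — more carefully, $\HH(S,\mathbf x)$ is the image of $\sum_{\mathbf w\in W}\mathbf x^{\mathbf w}\cdot\sum_{\mathbf n\in\mathbb N^m}\mathbf x^{n_1\mathbf v_1+\cdots+n_m\mathbf v_m}$ minus overcounting, and since $K[S]$ is a finitely generated graded $R$-module it has a finite graded free-plus-torsion presentation, yielding $\HH(S,\mathbf x)=P(\mathbf x)/\prod_{i=1}^m(1-\mathbf x^{\mathbf v_i})$ with $P(\mathbf x)\in\mathbb Z[\mathbf x^{\pm 1}]$; finally $P$ has no negative exponents because $\HH(S,\mathbf x)$ is a power series with nonnegative-exponent terms and $\prod(1-\mathbf x^{\mathbf v_i})$ has lowest term $1$. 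The one genuinely load-bearing lemma, which I would state and prove first, is: \emph{the set $W$ of elements of $S$ lying in $\{\sum_{i}q_i\mathbf v_i: 0\le q_i<1\}$ is finite, and $S=\bigsqcup_{\mathbf w\in W}(\mathbf w+\langle\mathbf v_1,\ldots,\mathbf v_m\rangle)$ need not hold as a disjoint union but $S\subseteq\bigcup_{\mathbf w\in W}(\mathbf w+\langle\mathbf v_1,\ldots,\mathbf v_m\rangle)$}, which suffices for finite generation as an $R$-module and hence for the rational form with the claimed denominator.
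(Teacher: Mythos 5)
Your overall strategy---realize $K[S]$ as a finitely generated $\mathbb N^m$-graded module over the polynomial ring $R=K[t^{\mathbf v_1},\ldots,t^{\mathbf v_m}]$ and invoke rationality of the Hilbert series of such a module---is sound, and your preliminary observations are correct: a single Ap\'ery set $\Ap(S,\mathbf m)$ is in general infinite, and the unique-decomposition route does lean on Cohen--Macaulayness. But the lemma you designate as load-bearing is false as stated. The set $W$ of elements of $S$ lying in the half-open parallelepiped $\{\sum_i q_i\mathbf v_i\mid 0\le q_i<1\}$ does \emph{not} in general satisfy $S\subseteq\bigcup_{\mathbf w\in W}(\mathbf w+\langle\mathbf v_1,\ldots,\mathbf v_m\rangle)$. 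Take $S=\langle(3,0),(0,3),(1,2)\rangle$, simplicial with extremal rays $\mathbf v_1=(3,0)$ and $\mathbf v_2=(0,3)$. Here $W=\{(0,0),(1,2)\}$, but $(2,4)=2(1,2)\in S$ and neither $(2,4)$ nor $(2,4)-(1,2)=(1,2)$ lies in $\langle(3,0),(0,3)\rangle=3\mathbb N\times 3\mathbb N$. The failure is exactly the trap you identified and avoided one paragraph earlier: subtracting $\lfloor q_i\rfloor\mathbf v_i$ from an element of $S$ lands you in $\cone(S)\cap\G(S)$, not necessarily in $S$.

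The correct finite set of module generators is $\bigcap_{i=1}^m\Ap(S,\mathbf v_i)$ (which in the example above contains $(2,4)$): every monomial of $K[S]$ outside this set is by definition divisible in $K[S]$ by some $t^{\mathbf v_i}$, and the finiteness of this set---the genuinely nontrivial input---is precisely what the paper imports from \cite[Section 1]{cm}. Alternatively, each generator $t^{\mathbf a}$ of $K[S]$ is integral over $R$, since $N\mathbf a\in\langle\mathbf v_1,\ldots,\mathbf v_m\rangle$ for some $N>0$ because $\mathbf a\in\cone(\mathbf v_1,\ldots,\mathbf v_m)$; hence $K[S]$ is a finitely generated $R$-module and its minimal monomial generating set is $\bigcap_i\Ap(S,\mathbf v_i)$. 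With that repair your argument goes through, and its conclusion is in fact slightly more robust than the paper's: the paper sums a geometric series over a \emph{unique} decomposition $\mathbf s=\sum_i a_i\mathbf v_i+\mathbf w$ cited from \cite[Theorem 1.5]{cm}, which tacitly uses a Cohen--Macaulay hypothesis not present in the statement, whereas the finite-free-resolution argument needs only finite generation (at the price of allowing negative coefficients in $P$).
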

\begin{proof}
Let $Ap=\bigcap_{i=1}^m \Ap(S,\mathbf v_i)$. In view of \cite[Section 1]{cm}, this set is finite. Moreover, from \cite[Theorem~1.5]{cm} we know that every element $\mathbf s$ in $S$ can be expressed uniquely as $\mathbf s=\sum_{i=1}^m a_i \mathbf v_i+ \mathbf w$ with $a_1,\ldots, a_d\in \mathbb N$ and $\mathbf w\in Ap$. Arguing as in Lemma \ref{hilbert-ap},
\[ \HH(S,\mathbf x)=\sum_{\mathbf s\in S}\mathbf x^\mathbf s= \frac{\sum_{\mathbf w\in Ap}\mathbf x^\mathbf w}{\prod_{i=1}^m(1-x^{\mathbf v_i})},\]
which concludes the proof.
\end{proof}

\begin{theorem}\label{th-gl-hil}
Let $S$, $S_1$ and $S_2$ be affine semigroups, and let $\mathbf d\in S$. Assume that $S=S_1+_\mathbf d S_2$. Then
\[
\HH(S_1+_\mathbf d S_2,\mathbf x)= (1-\mathbf x^d)\HH(S_1,\mathbf x)\HH(S_2,\mathbf x).
\]
\end{theorem}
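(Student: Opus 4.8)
The plan is to compute both sides as formal power series and show the coefficient of $\mathbf{x}^\mathbf{s}$ agrees for every $\mathbf{s}$. The key structural fact is that since $S = S_1 +_\mathbf{d} S_2$, every element $\mathbf{s}$ of $S$ can be written as $\mathbf{s}_1 + \mathbf{s}_2$ with $\mathbf{s}_i \in S_i$, but this decomposition is \emph{not} unique: if $\mathbf{s} = \mathbf{s}_1 + \mathbf{s}_2 = \mathbf{s}_1' + \mathbf{s}_2'$, then $\mathbf{s}_1 - \mathbf{s}_1' = \mathbf{s}_2' - \mathbf{s}_2 \in \G(S_1) \cap \G(S_2) = \mathbf{d}\mathbb{Z}$, so the ambiguity is exactly a shift by an integer multiple of $\mathbf{d}$. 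First I would make this precise: fix for each $\mathbf{s} \in S$ one decomposition and show that the full set of decompositions is $\{(\mathbf{s}_1 + k\mathbf{d},\ \mathbf{s}_2 - k\mathbf{d}) : k \in \mathbb{Z},\ \mathbf{s}_1 + k\mathbf{d} \in S_1,\ \mathbf{s}_2 - k\mathbf{d} \in S_2\}$. Since $S_1, S_2$ are reduced (pointed), for fixed $\mathbf{s}$ the set of admissible $k$ is a finite interval of consecutive integers, say $k \in \{a, a+1, \dots, b\}$, hence the number of decompositions of $\mathbf{s}$ is $b - a + 1$.

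Next I would interpret the right-hand side combinatorially. The product $\HH(S_1,\mathbf{x})\HH(S_2,\mathbf{x}) = \sum_{\mathbf{s}_1 \in S_1, \mathbf{s}_2 \in S_2} \mathbf{x}^{\mathbf{s}_1 + \mathbf{s}_2}$, so the coefficient of $\mathbf{x}^\mathbf{s}$ in this product is precisely the number of decompositions of $\mathbf{s}$ as $\mathbf{s}_1 + \mathbf{s}_2$ with $\mathbf{s}_i \in S_i$, which is $0$ if $\mathbf{s} \notin S$ and $b - a + 1$ if $\mathbf{s} \in S$ (with $a, b$ as above, depending on $\mathbf{s}$). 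Multiplying by $(1 - \mathbf{x}^\mathbf{d})$ telescopes: the coefficient of $\mathbf{x}^\mathbf{s}$ in $(1-\mathbf{x}^\mathbf{d})\HH(S_1,\mathbf{x})\HH(S_2,\mathbf{x})$ equals $c(\mathbf{s}) - c(\mathbf{s} - \mathbf{d})$, where $c(\mathbf{t})$ denotes the number of $S_1$-$S_2$ decompositions of $\mathbf{t}$ (and $c(\mathbf{t}) = 0$ if $\mathbf{t} \notin \mathbb{Z}^m$ or $\mathbf{t} \notin S$). I claim this difference is $1$ when $\mathbf{s} \in S$ and $0$ otherwise, which would match $\HH(S,\mathbf{x}) = \sum_{\mathbf{s} \in S} \mathbf{x}^\mathbf{s}$.

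To verify the claim, observe that the set of admissible shifts $k$ for $\mathbf{s} - \mathbf{d}$ is obtained from that for $\mathbf{s}$ by a shift: if $\mathbf{s} = \mathbf{s}_1 + \mathbf{s}_2$ is a fixed decomposition, then $\mathbf{s} - \mathbf{d} = (\mathbf{s}_1 - \mathbf{d}) + \mathbf{s}_2 = \mathbf{s}_1 + (\mathbf{s}_2 - \mathbf{d})$, and the admissible $k$ for $\mathbf{s} - \mathbf{d}$ written as $(\mathbf{s}_1 + k\mathbf{d}) + (\mathbf{s}_2 - \mathbf{d} - k\mathbf{d})$ are exactly those with $\mathbf{s}_1 + k\mathbf{d} \in S_1$ and $\mathbf{s}_2 - (k+1)\mathbf{d} \in S_2$. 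Comparing, if $\mathbf{s} \in S$ has admissible set $\{a, \dots, b\}$ for the parametrization $(\mathbf{s}_1 + k\mathbf{d}, \mathbf{s}_2 - k\mathbf{d})$, then $\mathbf{s} - \mathbf{d}$ has, under a compatible parametrization, admissible set $\{a, \dots, b-1\}$ (the lower end is unchanged since $\mathbf{s}_1 + k\mathbf{d} \in S_1$ is the same condition; the upper end drops by one because requiring $\mathbf{s}_2 - (k+1)\mathbf{d} \in S_2$ is the old condition with $k$ replaced by $k+1$). Hence $c(\mathbf{s}) - c(\mathbf{s}-\mathbf{d}) = (b - a + 1) - (b - a) = 1$. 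If $\mathbf{s} \notin S$ then $c(\mathbf{s}) = 0$, and moreover $\mathbf{s} - \mathbf{d} \notin S$ as well (since $\mathbf{d} \in S$, if $\mathbf{s} - \mathbf{d} \in S$ then $\mathbf{s} \in S$), so $c(\mathbf{s} - \mathbf{d}) = 0$ and the difference is $0$; a symmetric edge-case check handles $\mathbf{s} - \mathbf{d} \notin \mathbb{Z}^m_{\geq 0}$.

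The main obstacle I anticipate is bookkeeping the finiteness and the exact endpoints of the admissible-$k$ interval carefully — in particular making sure the interval is nonempty and that the upper endpoint genuinely decreases by exactly one when passing from $\mathbf{s}$ to $\mathbf{s} - \mathbf{d}$, using that $S_1$ and $S_2$ are pointed so the sets $\{k \in \mathbb{Z} : \mathbf{s}_1 + k\mathbf{d} \in S_1\}$ and $\{k : \mathbf{s}_2 - k\mathbf{d} \in S_2\}$ are each a ``ray'' (bounded below, resp. above) intersected appropriately. Everything else — the identification of coefficients of the product and the telescoping — is formal. An alternative, slicker route avoiding some of this casework: use Lemma \ref{hilbert-ap} with $\mathbf{m} = \mathbf{d}$ to write each $\HH(S_i,\mathbf{x}) = \frac{1}{1-\mathbf{x}^\mathbf{d}}\sum_{\mathbf{w}\in\Ap(S_i,\mathbf{d})}\mathbf{x}^\mathbf{w}$ and $\HH(S,\mathbf{x}) = \frac{1}{1-\mathbf{x}^\mathbf{d}}\sum_{\mathbf{w}\in\Ap(S,\mathbf{d})}\mathbf{x}^\mathbf{w}$, reducing the claim to the identity $\sum_{\mathbf{w}\in\Ap(S,\mathbf{d})}\mathbf{x}^\mathbf{w} = \sum_{\mathbf{w}_1\in\Ap(S_1,\mathbf{d})}\sum_{\mathbf{w}_2\in\Ap(S_2,\mathbf{d})}\mathbf{x}^{\mathbf{w}_1+\mathbf{w}_2}$, i.e.\ to showing that $\mathbf{w}\mapsto$ (its unique decomposition) gives a bijection $\Ap(S,\mathbf{d}) \to \Ap(S_1,\mathbf{d})\times\Ap(S_2,\mathbf{d})$; I would likely present this version, as the bijection is cleaner to state, with the gluing condition $\G(S_1)\cap\G(S_2) = \mathbf{d}\mathbb{Z}$ doing exactly the work of guaranteeing well-definedness and injectivity.
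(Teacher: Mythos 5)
Your proposal is correct, and the ``alternative, slicker route'' you sketch at the end is in fact exactly the paper's proof: apply Lemma \ref{hilbert-ap} with $\mathbf m=\mathbf d$ to $S$, $S_1$ and $S_2$, and reduce the identity to the statement that $(\mathbf w_1,\mathbf w_2)\mapsto \mathbf w_1+\mathbf w_2$ is a bijection $\Ap(S_1,\mathbf d)\times\Ap(S_2,\mathbf d)\to\Ap(S,\mathbf d)$. The only difference is that the paper imports this bijection from \cite[Theorem 1.4]{gluing}, whereas you propose deriving it directly from $\G(S_1)\cap\G(S_2)=\mathbf d\mathbb Z$; that derivation does go through (the gluing condition forces any two decompositions to differ by a multiple of $\mathbf d$, which kills both non-uniqueness and the possibility that a summand fails to be an Ap\'ery element). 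Your primary argument is a genuinely different route: count all decompositions $\mathbf s=\mathbf s_1+\mathbf s_2$, note that the admissible shifts form a finite interval $\{a,\dots,b\}$ because $\{k:\mathbf s_1+k\mathbf d\in S_1\}$ is up-closed and bounded below while $\{k:\mathbf s_2-k\mathbf d\in S_2\}$ is down-closed and bounded above, and telescope against $(1-\mathbf x^{\mathbf d})$. This version is more self-contained --- it needs neither Ap\'ery sets nor an external citation --- at the price of the endpoint bookkeeping you already flag; that bookkeeping does work out ($c(\mathbf s-\mathbf d)=\max(0,b-a)$ in all cases, including when $\mathbf s-\mathbf d$ leaves $\mathbb N^m$, and $\mathbf s\notin S$ implies $\mathbf s-\mathbf d\notin S$ since $\mathbf d\in S$). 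Either version is a complete proof.
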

\begin{proof}
From \eqref{hilbert-apery},
\[\HH(S,\mathbf x)= \frac{1}{1-\mathbf x^\mathbf d}\sum_{\mathbf w\in \Ap(S,\mathbf d)}\mathbf  x^\mathbf w.\] 

From \cite[Theorem 1.4]{gluing}, the mapping 
\begin{equation}\label{sum-ap}
 \Ap(S_1,\mathbf d)\times \Ap(S_2,\mathbf d)\to \Ap(S,\mathbf d),\ (x,y)\mapsto x+y
\end{equation}
is a bijection, and thus $\Ap(S,\mathbf d)=\Ap(S_1,\mathbf d)+\Ap(S_2,\mathbf d)$.
Hence,
\[ 
\sum_{\mathbf w\in \Ap(S\mathbf ,d)} x^\mathbf w = \sum_{\mathbf w_1\in\Ap(S_1,\mathbf d)} \sum_{\mathbf w_2\in\Ap(S_2,\mathbf d)} \mathbf x^{\mathbf w_1+\mathbf w_2} = \left(\sum_{\mathbf w_1\in \Ap(S_1,\mathbf d)}\mathbf  x^{\mathbf w_1}\right)\left(\sum_{\mathbf w_2\in \Ap(S_2,\mathbf d)}\mathbf  x^{\mathbf w_2}\right).
\] 

As $\HH(S_1,\mathbf x)= \frac{1}{1-\mathbf x^{\mathbf d}}\sum_{\mathbf w_1\in\Ap(S_1,\mathbf d)}x^{\mathbf w_1}$ and $\HH(S_2,\mathbf x)= \frac{1}{1-\mathbf x^{\mathbf d}}\sum_{\mathbf w_2\in\Ap(S_2,\mathbf d)}\mathbf x^{\mathbf w_2}$, we get 
\[
\HH(S,\mathbf x)= (1-\mathbf x^{\mathbf d})\HH(S_1,\mathbf x)\HH(S_2,\mathbf x).\qedhere
\]
\end{proof}

If $S$ is a numerical semigroup ($\gcd(S)=1$), and it is a gluing of $M_1$ and $M_2$, then $S_1=M_1/d_1$ and $S_2=M_2/d_2$ are also numerical semigroups, with $d_i=\gcd(M_i)$, $i\in\{1,2\}$. Hence $S=d_1S_1+_{d_1d_2}d_2S_2$ and $\mathrm{lcm}(d_1,d_2)=d_1d_2$. We say in this setting that $S$ is a gluing of $S_1$ and $S_2$ at $d_1d_2$. 

From the definition of Hilbert series associated to a submonoid $M$ of $N$, it follows easily that if $k\mid \gcd(M)$, then 
\begin{equation}\label{hilbert-cociente}
\HH(M/k,x^k)=\HH(M,x).
\end{equation}

We get the following corollary.

\begin{corollary}\label{cor-gl-hil}
Let $S$ be a numerical semigroup. Assume that $S=d_1S_1+_{d_1d_2}d_2S_2$ is a gluing of the numerical semigroups $S_1$ and $S_2$. Then 
\[\HH(S,x)=(1-x^{d_1d_2})\HH(S_1,x^{d_1})\HH(S_2,x^{d_2}).\]
\end{corollary}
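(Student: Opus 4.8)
The plan is to derive Corollary~\ref{cor-gl-hil} directly from Theorem~\ref{th-gl-hil} together with the substitution identity \eqref{hilbert-cociente}. First I would record the setup: since $S = d_1S_1 +_{d_1d_2} d_2S_2$, we apply Theorem~\ref{th-gl-hil} with the r\^ole of $S_1$ played by the submonoid $M_1 := d_1S_1$ of $\mathbb N$, the r\^ole of $S_2$ played by $M_2 := d_2S_2$, and gluing element $\mathbf d = d_1d_2$ (a genuine positive integer lying in $M_1\cap M_2$). This is legitimate because $M_1, M_2$ are affine semigroups (submonoids of $\mathbb N$) and the hypothesis of the corollary is precisely that $S$ is their gluing at $d_1d_2$. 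Theorem~\ref{th-gl-hil} then yields
\[
\HH(S,x) = (1-x^{d_1d_2})\,\HH(d_1S_1,x)\,\HH(d_2S_2,x).
\]

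The second step is to rewrite each factor $\HH(d_iS_i,x)$ in terms of $\HH(S_i,-)$. Here I invoke \eqref{hilbert-cociente}: taking $M = d_iS_i$ and $k = d_i$, which divides $\gcd(d_iS_i)$, we get $\HH\big((d_iS_i)/d_i, x^{d_i}\big) = \HH(d_iS_i,x)$. Since $(d_iS_i)/d_i = S_i$, this says $\HH(d_iS_i,x) = \HH(S_i, x^{d_i})$ for $i\in\{1,2\}$. Substituting both identities into the displayed equation above gives
\[
\HH(S,x) = (1-x^{d_1d_2})\,\HH(S_1,x^{d_1})\,\HH(S_2,x^{d_2}),
\]
which is exactly the claimed formula.

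I do not anticipate a genuine obstacle here: the corollary is a formal consequence of the theorem and the substitution rule, and both are available. The only points that deserve a sentence of care are (i) checking that $d_1d_2\in M_1\cap M_2$, which holds because $d_1d_2 = d_1(d_2\cdot 1)\in d_1S_1$ as $d_2\in S_2$ (every numerical semigroup contains large integers, in particular $d_2$, after multiplying $S_2$ appropriately — more simply, $d_1d_2$ is a multiple of both $d_1$ and $d_2$ and both $S_i$ contain some multiple structure; in fact the hypothesis "$S$ is a gluing at $d_1d_2$" already presupposes $d_1d_2\in M_1\cap M_2$ and $\G(M_1)\cap\G(M_2) = d_1d_2\mathbb Z$), and (ii) confirming that $d_i \mid \gcd(d_iS_i)$ so that \eqref{hilbert-cociente} applies — this is immediate since every element of $d_iS_i$ is a multiple of $d_i$. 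With these remarks the proof is complete.

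\begin{proof}
Since $S = d_1S_1 +_{d_1d_2} d_2S_2$, applying Theorem~\ref{th-gl-hil} to the affine semigroups $d_1S_1$ and $d_2S_2$ and the gluing element $d_1d_2$ gives
\[
\HH(S,x) = (1-x^{d_1d_2})\,\HH(d_1S_1,x)\,\HH(d_2S_2,x).
\]
For $i\in\{1,2\}$ every element of $d_iS_i$ is divisible by $d_i$, so $d_i \mid \gcd(d_iS_i)$ and \eqref{hilbert-cociente} applies with $M = d_iS_i$ and $k = d_i$, yielding $\HH(S_i,x^{d_i}) = \HH\big((d_iS_i)/d_i, x^{d_i}\big) = \HH(d_iS_i,x)$. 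Substituting these two identities into the expression above gives
\[
\HH(S,x) = (1-x^{d_1d_2})\,\HH(S_1,x^{d_1})\,\HH(S_2,x^{d_2}),
\]
as claimed.
\end{proof}
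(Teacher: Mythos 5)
Your proof is correct and follows exactly the route the paper intends: the corollary is obtained by applying Theorem~\ref{th-gl-hil} to the gluing $d_1S_1+_{d_1d_2}d_2S_2$ and then rewriting each factor via the substitution identity \eqref{hilbert-cociente}. The paper leaves this derivation implicit, and your write-up supplies precisely the missing (routine) details.
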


%
%
%
%

\begin{example}
Let $S=\langle a,b\rangle$ with $a$ and $b$ coprime positive integers. Then $S=a\mathbb N+_{ab} b\mathbb N$. Then by Corollary \ref{cor-gl-hil}, \[\HH(\langle a,b\rangle,x)= (1-x^{ab})\HH(\mathbb N, x^a)\HH(\mathbb N, x^b)= \frac{1-x^{ab}}{(1-x^a)(1-x^b)}.\] 

If we do this computation by using the formula $\HH(\langle a,b\rangle,x)=\frac{1}{1-x^a}\sum_{w\in \Ap(\langle a,b\rangle, a)} x^w$, we obtain, $\HH(\langle a,b\rangle,x)=\frac{1}{1-x^a} \sum_{k=0}^{a-1} x^{kb} = \frac{1}{1-x^a}\frac{1-x^{ab}}{1-x^b}$. Observe that this is a particular case of \cite[Proposition 2]{ra-r} (see also \cite[Theorem 4]{m} for a relationship with inclusion-exclusion polynomials). 
\end{example}

This idea can be generalized to any complete intersection affine semigroup. The base setting is the following.

\begin{lemma}
Let $A\subseteq \mathbb N^m$ be a set of linearly independent vectors. Then
\[
\HH(\langle A\rangle,\mathbf x)=\frac{1}{\prod_{\mathbf a\in A}(1-\mathbf x^\mathbf a)}.
\]
\end{lemma}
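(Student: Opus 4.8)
The plan is to observe that when $A \subseteq \mathbb{N}^m$ consists of linearly independent vectors, the monoid $\langle A \rangle$ is free (in the categorical sense), so every element of $\langle A \rangle$ has a \emph{unique} representation as a nonnegative integer combination of the elements of $A$. This uniqueness is the entire content of the lemma, and it follows immediately from linear independence: if $\sum_{\mathbf a \in A} n_\mathbf{a} \mathbf{a} = \sum_{\mathbf a \in A} n'_\mathbf{a} \mathbf{a}$ with $n_\mathbf{a}, n'_\mathbf{a} \in \mathbb{N}$, then $\sum_{\mathbf a \in A}(n_\mathbf{a} - n'_\mathbf{a})\mathbf{a} = 0$ in $\mathbb{Z}^m \otimes \mathbb{Q}$, and linear independence forces $n_\mathbf{a} = n'_\mathbf{a}$ for all $\mathbf a$.

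Given this, I would compute the Hilbert series directly from its definition $\HH(\langle A \rangle, \mathbf{x}) = \sum_{\mathbf{s} \in \langle A \rangle} \mathbf{x}^\mathbf{s}$. Using the unique-representation fact, this sum can be reindexed as a sum over tuples $(n_\mathbf{a})_{\mathbf a \in A} \in \mathbb{N}^{|A|}$ of the monomial $\mathbf{x}^{\sum_{\mathbf a} n_\mathbf{a} \mathbf{a}} = \prod_{\mathbf a \in A} (\mathbf{x}^\mathbf{a})^{n_\mathbf{a}}$. This is exactly the expansion of the product of geometric series:
\[
\HH(\langle A \rangle, \mathbf{x}) = \sum_{(n_\mathbf{a}) \in \mathbb{N}^{|A|}} \prod_{\mathbf a \in A} (\mathbf{x}^\mathbf{a})^{n_\mathbf{a}} = \prod_{\mathbf a \in A} \left( \sum_{n \in \mathbb{N}} (\mathbf{x}^\mathbf{a})^n \right) = \prod_{\mathbf a \in A} \frac{1}{1 - \mathbf{x}^\mathbf{a}},
\]
where the middle equality is the distributive law for the (formal) product of the convergent/formal series, valid because the terms are pairwise distinct monomials.

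Alternatively, and perhaps more in the spirit of the preceding results, one can argue by induction on $|A|$ using Theorem \ref{th-gl-hil}: writing $A = A' \cup \{\mathbf a_0\}$ with $A'$ still linearly independent, the monoid $\langle A \rangle$ is the gluing $\langle A' \rangle +_{\mathbf{d}} \langle \mathbf a_0 \rangle$ for a suitable $\mathbf{d}$ — but in fact this requires checking the gluing condition $\G(\langle A' \rangle) \cap \G(\langle \mathbf a_0 \rangle) = \mathbf d \mathbb{Z}$, which by linear independence of $A$ gives $\G(\langle A' \rangle) \cap \mathbb{Z}\mathbf a_0 = 0$, so actually $\mathbf{d} = 0$ and one does not have a genuine gluing. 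Because of this degeneracy the inductive route via Theorem \ref{th-gl-hil} is not directly available, so the clean path is the direct computation above. The only point that requires a word of care — the ``main obstacle,'' such as it is — is justifying the interchange of sum and product, i.e.\ that the formal power series manipulation is legitimate; this is routine since for each multidegree only finitely many tuples $(n_\mathbf{a})$ contribute (again by uniqueness of representation, exactly one), so there are no convergence or rearrangement issues.
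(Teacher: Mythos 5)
Your proof is correct and follows essentially the same route as the paper: the paper likewise notes that linear independence makes $(n_1,\ldots,n_k)\mapsto\sum_i n_i\mathbf a_i$ a monoid isomorphism $\mathbb N^k\to\langle A\rangle$ and then factors the Hilbert series as a product of geometric series. Your extra remark about why the gluing-based induction degenerates is a reasonable aside but not needed.
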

\begin{proof}
Assume that $A=\{\mathbf a_1,\ldots,\mathbf a_k\}$, and write $S=\langle A\rangle$. Notice that the map $\mathbb N^k\to S$, $(n_1,\ldots,n_k)\mapsto \sum_{i=1}^k n_i\mathbf a_i$ is a monoid isomorphism. Hence 
\[\sum_{\mathbf s\in S}x^s=\sum_{n_1\in \mathbb N, \ldots, n_k\in \mathbb N} \mathbf x^{n_1\mathbf a_1+\cdots +n_k\mathbf a_k}= \prod_{i=1}^k \sum_{n\in \mathbb N} (x^{\mathbf a_i})^n,\]
and the proof follows easily.
\end{proof}

\begin{proposition}
Let $S$ be a free affine semigroup. Assume that 
\[S=(\cdots(\langle \mathbf  v_1,\ldots, \mathbf v_e\rangle+_{\theta_{e+1}\mathbf v_{e+1}}\langle \mathbf v_{e+1}\rangle)+_{\theta_{e+2}\mathbf v_{e+2}}\cdots) +_{\theta_{e+h}\mathbf v_{e+h}}\langle \mathbf v_{e+h}\rangle.\]
Then 
\[ \HH(S,\mathbf x)= \frac{\prod_{i=1}^h (1-\mathbf x^{\theta_{e+i}\mathbf v_{e+i}})}{\prod_{i=1}^{e+h} (1-\mathbf x^\mathbf v_i)}.\]
\end{proposition}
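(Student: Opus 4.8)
The plan is to proceed by induction on the number of gluings $h$, using Theorem~\ref{th-gl-hil} together with the preceding lemma on free (linearly independent) generating sets as the base case. Concretely, write $T_0 = \langle \mathbf v_1,\ldots,\mathbf v_e\rangle$ and $T_j = T_{j-1} +_{\theta_{e+j}\mathbf v_{e+j}} \langle \mathbf v_{e+j}\rangle$ for $1 \le j \le h$, so that $S = T_h$. The base case $h=0$ is exactly the previous lemma: since $\{\mathbf v_1,\ldots,\mathbf v_e\}$ is a set of linearly independent vectors, $\HH(T_0,\mathbf x) = 1/\prod_{i=1}^e (1-\mathbf x^{\mathbf v_i})$.

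For the inductive step, assume the formula holds for $T_{j-1}$, that is,
\[
\HH(T_{j-1},\mathbf x) = \frac{\prod_{i=1}^{j-1}(1-\mathbf x^{\theta_{e+i}\mathbf v_{e+i}})}{\prod_{i=1}^{e+j-1}(1-\mathbf x^{\mathbf v_i})}.
\]
Since $T_j = T_{j-1} +_{\theta_{e+j}\mathbf v_{e+j}} \langle \mathbf v_{e+j}\rangle$, Theorem~\ref{th-gl-hil} with $\mathbf d = \theta_{e+j}\mathbf v_{e+j}$ gives
\[
\HH(T_j,\mathbf x) = (1-\mathbf x^{\theta_{e+j}\mathbf v_{e+j}})\,\HH(T_{j-1},\mathbf x)\,\HH(\langle \mathbf v_{e+j}\rangle,\mathbf x).
\]
The single generator $\mathbf v_{e+j}$ is trivially a linearly independent set, so the previous lemma yields $\HH(\langle \mathbf v_{e+j}\rangle,\mathbf x) = 1/(1-\mathbf x^{\mathbf v_{e+j}})$. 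Substituting the induction hypothesis and this value, the factor $(1-\mathbf x^{\theta_{e+j}\mathbf v_{e+j}})$ appends a new term to the numerator product and $1/(1-\mathbf x^{\mathbf v_{e+j}})$ appends the factor $i = e+j$ to the denominator product, which is precisely the claimed formula for $T_j$. Setting $j = h$ completes the induction.

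The only thing to be careful about is that Theorem~\ref{th-gl-hil} applies: one must check that each $T_j$ is genuinely the gluing of $T_{j-1}$ and $\langle \mathbf v_{e+j}\rangle$ by $\theta_{e+j}\mathbf v_{e+j}$, but this is part of the hypothesis defining a free affine semigroup (as recalled in Remark~\ref{free}), so no additional work is needed. There is no real obstacle here; the statement is a routine unwinding of the multiplicative behaviour of Hilbert series under gluing, and the mild bookkeeping is just tracking how each successive factor $(1-\mathbf x^{\theta_{e+i}\mathbf v_{e+i}})$ and $(1-\mathbf x^{\mathbf v_{e+i}})^{-1}$ enters the numerator and denominator.
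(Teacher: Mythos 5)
Your proof is correct and is exactly the argument the paper intends: the paper states this proposition without a written proof, remarking only that it is a particular case of the general complete-intersection formula (Theorem~\ref{formula-hilbert-ci}), which itself rests on the same induction combining Theorem~\ref{th-gl-hil} with the lemma on linearly independent generating sets. Your explicit induction on the number of gluings fills in precisely that implicit argument, with no gaps.
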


This is indeed a particular case of the following theorem.

\begin{theorem} \label{formula-hilbert-ci}
Let $S$ be a complete intersection affine semigroup minimally generated by $A$. Let $\mathbf d_1,\ldots, \mathbf d_{t-1}$ be as in Remark \ref{gluing-ci},
\[\HH(S,\mathbf x) = \frac{\prod_{i=1}^{t-1}(1-\mathbf x^{\mathbf d_i})}{\prod_{\mathbf a\in A}(1-\mathbf x^\mathbf a)}.\]
\end{theorem}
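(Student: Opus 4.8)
The plan is to prove Theorem \ref{formula-hilbert-ci} by induction on the number $t$ of blocks in the partition $A = A_1 \cup \cdots \cup A_t$ coming from Remark \ref{gluing-ci}. By \cite{fischer97} and the recursive description in Remark \ref{gluing-ci}, a complete intersection affine semigroup $S$ is either free on a set of linearly independent vectors (the base case $t=1$) or of the form $S = S' +_{\mathbf d_{t-1}} S_t$, where $S' = S_1 +_{\mathbf d_1} \cdots +_{\mathbf d_{t-2}} S_{t-1}$ is again a complete intersection affine semigroup (minimally generated by $A_1 \cup \cdots \cup A_{t-1}$, with gluing data $\mathbf d_1,\ldots,\mathbf d_{t-2}$) and $S_t = \langle A_t \rangle$ is free on the linearly independent set $A_t$.

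For the base case $t=1$, the formula $\HH(S,\mathbf x) = 1/\prod_{\mathbf a \in A}(1-\mathbf x^{\mathbf a})$ is exactly the preceding lemma (the one computing $\HH(\langle A\rangle,\mathbf x)$ for $A$ linearly independent), and the empty product over $i \in \{1,\ldots,t-1\} = \emptyset$ in the numerator is $1$, so it matches. For the inductive step, I would apply Theorem \ref{th-gl-hil} to the gluing $S = S' +_{\mathbf d_{t-1}} S_t$, obtaining
\[
\HH(S,\mathbf x) = (1-\mathbf x^{\mathbf d_{t-1}})\,\HH(S',\mathbf x)\,\HH(S_t,\mathbf x).
\]
Then I substitute the inductive hypothesis $\HH(S',\mathbf x) = \prod_{i=1}^{t-2}(1-\mathbf x^{\mathbf d_i}) / \prod_{\mathbf a \in A_1 \cup \cdots \cup A_{t-1}}(1-\mathbf x^{\mathbf a})$ and the lemma $\HH(S_t,\mathbf x) = 1/\prod_{\mathbf a \in A_t}(1-\mathbf x^{\mathbf a})$, and multiply. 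Since $A = (A_1 \cup \cdots \cup A_{t-1}) \cup A_t$ is a disjoint union, the denominators combine into $\prod_{\mathbf a \in A}(1-\mathbf x^{\mathbf a})$, and the factor $(1-\mathbf x^{\mathbf d_{t-1}})$ joins the numerator product to give $\prod_{i=1}^{t-1}(1-\mathbf x^{\mathbf d_i})$, which is the claimed formula.

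The only genuinely delicate point is making sure the recursive decomposition of Remark \ref{gluing-ci} can be arranged so that $S'$ itself is a complete intersection affine semigroup to which the inductive hypothesis applies, with its own gluing data $\mathbf d_1,\ldots,\mathbf d_{t-2}$; this is precisely what Fischer--Morris--Shapiro \cite{fischer97} guarantees (a gluing of complete intersections is a complete intersection, and conversely), so no real obstacle arises — one simply has to be careful that the $\mathbf d_i$ appearing in the statement are indexed consistently with the order in which the gluings are performed. The rest is a routine manipulation of rational functions, with the bookkeeping on disjoint unions of the generating sets $A_i$ being the thing to state carefully rather than something hard. I would also remark that the displayed Proposition immediately before the theorem (the free case with the $\theta_{e+i}\mathbf v_{e+i}$) is recovered by taking each $A_i$ for $i \geq 2$ to be a singleton $\{\mathbf v_{e+i-1}\}$, so no separate proof of that proposition is needed.
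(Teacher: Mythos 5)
Your argument is correct and is exactly the proof the paper intends: the theorem is stated right after Theorem \ref{th-gl-hil} and the lemma computing $\HH(\langle A\rangle,\mathbf x)$ for linearly independent $A$, and the omitted proof is precisely the induction you describe (base case the lemma, inductive step Theorem \ref{th-gl-hil}, denominators combining over the disjoint union $A=A_1\cup\cdots\cup A_t$). The one point I would adjust is the shape of the induction. The decomposition of Remark \ref{gluing-ci}, coming from \cite{fischer97}, is a priori a binary tree of gluings: $S$ is either free or $S=S'+_{\mathbf d}S''$ with \emph{both} $S'$ and $S''$ complete intersections, each of which decomposes further. It is not guaranteed that this tree can be re-associated into a left comb whose last factor $S_t$ is free, which is what your inductive step assumes when you peel off a single free block $S_t=\langle A_t\rangle$. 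The fix costs nothing: induct on the number of blocks and, in the inductive step, write $S=S'+_{\mathbf d}S''$ with $S'$ and $S''$ complete intersections having strictly fewer blocks, apply the inductive hypothesis to \emph{both} factors, and multiply using Theorem \ref{th-gl-hil}; since the claimed formula is multiplicative over the blocks and the gluing elements, the bookkeeping is identical and the indexing of the $\mathbf d_i$ is simply ``all gluing elements occurring in the tree.'' With that rephrasing your proof is complete, and your closing observation that the free-semigroup Proposition is the special case where each $A_i$ with $i\ge 2$ is a singleton is also correct.
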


\begin{remark}
Observe that if we substract the degree of the numerator and denominator of the formula given in Theorem \ref{formula-hilbert-ci} we obtain Formula (\ref{ecu1}).
\end{remark}

\begin{example}
Let $S=\langle 4,5,6\rangle = \langle 4,6\rangle +_{10} 5\mathbb N=(4\mathbb N+_{12}6\mathbb N)+_{10}5\mathbb N$. Then
\[\HH(\langle 4,5,6\rangle,x)=\frac{(1-x^{10})(1-x^{12})}{(1-x^4)(1-x^5)(1-x^6)}.\]
The Frobenius number of $S$ is $10+12-(4+5+6)=7$.
\end{example}

\section*{Acknowledgments}

Part of this research was performed while the second author visited the Universit\'e d'Angers, and he wants to thank the D\'epartement de Math\'ematiques of this university for its kind hospitality


\end{document}